\newcommand{\dd}{\,d}                    
{\newcommand{\pd}{\partial}                    
{\newcommand{\pp}{.}                      
{\newcommand{\pk}{,}

\newtheorem{thm}{Theorem}[section]

\newtheorem{lem}[thm]{Lemma}

\newtheorem{rem}[thm]{Remark}
\newtheorem{defin}[thm]{Definition}

\parindent 0pt
\newcommand{\enter}{\bigskip}

\newcommand{\dps}{\displaystyle}

\begin{document}
\thispagestyle{empty}

\author{{Rajesh Kumar$$\footnote{Corresponding author: currently working at RICAM, Austrian Academy of Sciences, Altenberger
strasse 69, 4040 Linz, Austria.
Tel: +43-73224685248, {\it{${}$ Email address: }}rajineurope@gmail.com} { }, Jitendra Kumar and Gerald Warnecke}\vspace{.2cm}\\
\small \it Institute for Analysis and Numerics, Otto-von-Guericke University Magdeburg,\\
\small \it Universit\"{a}tsplatz 2, D-39106 Magdeburg, Germany \vspace{.2cm} \\
}

\title{Convergence analysis of a finite volume scheme for solving non-linear aggregation-breakage population balance equations}

\maketitle

\hrule \vskip 8pt

\begin{quote}
{\small {\em\bf Abstract}

This paper presents stability and convergence analysis of a finite volume scheme (FVS) for solving aggregation, breakage 
and the combined processes by showing Lipschitz continuity of the numerical fluxes. It is shown that the FVS is second 
order convergent independently of the meshes for pure breakage problem while for pure aggregation and coupled 
equations, it shows second order convergent on uniform and non-uniform smooth meshes. Furthermore, it gives only 
first order convergence on non-uniform grids. The mathematical results of convergence analysis are also 
demonstrated numerically for several test problems.}
\end{quote}

{\bf Keywords:}  Aggregation, breakage, finite volume, consistency, convergence.


\vskip 10pt \hrule

\section{Introduction}
The aggregation-breakage population balance equations (PBEs) are the models for the growth of particles by combined 
effect of aggregation and breakage. Each particle is identified here by its size, i.e.\ volume or mass. The 
equations we consider in this paper describe the time evolution of the particle size distribution (PSD) under the 
simultaneous effect of binary aggregation and multiple breakage. In binary aggregation, two particles combine together 
to form a bigger one whereas in breakage process, a big particle breaks into two or many fragments. There are many
engineering applications, including aerosol physics, high shear granulation, highly demanding nano-particles and 
pharmaceutical industries etc., see Sommer et al.\ \cite{Sommer:2006}, Gokhale 
et al.\ \cite{Gokhale:2009} and references therein. Binary breakage is not sufficient for some of these applications, 
therefore, multiple fragmentation is preferred. The temporal change
of the particle number density, $f(t,x) \geq 0$, of particles of volume
$x \in \mathbb{R}_{>0}$ at time $t\in \mathbb{R}_{>0}$ in a
spatially homogeneous physical system undergoing an aggregation-breakage process is
described by the following well known PBEs,
see \cite{Melzak:1957, Ziff:1991}
\begin{align}\label{pbe1}
\frac{\pd f(t, x)}{\pd t}= &\frac{1}{2} \int_{0}^{x}\beta(x-u,u) f(t, x-u) f(t, u) du -
\int_{0}^{\infty} \beta(x, u) f(t,u) f(t, x) du \nonumber \\ & +\int_x^\infty b(x, \epsilon) S(\epsilon) f(t, \epsilon) \dd \epsilon -S(x) f(t, x) \pk
\end{align}
with initial data
\begin{align}\label{pbe1_IC}
f(0,x) = f^{\text{in}}(x)\geq 0,\ \ \ x\in ]0,\infty[.
\end{align}
The first two terms on the right-hand side (rhs) are due to aggregation while the third and fourth terms
model the breakage process. The two positive terms describe the
creation of particles of size $x$ and are called the birth terms for
aggregation respectively breakage. The two negative terms describe
the disappearance of particles of size $x$ and are commonly called the
death terms. The aggregation kernel $\beta(x,y)\geq 0$ characterizes the rate at which two particles of volumes $x$ 
and $y$ combine together. It also satisfies the symmetry condition $\beta(x, y) = \beta(y, x)$. The selection function $S(\epsilon)$ describes the rate
at which particles of size $\epsilon$ are selected to break. The
breakage function $b(x, \epsilon)$ for a given $\epsilon>0$ gives the size distribution of particle sizes 
$x\in [0,\epsilon[$ resulting from the breakage of a particle of size $\epsilon$. For the particular case 
of $b(x,\epsilon)= 2/\epsilon$, the multiple breakage PBE turns into the binary breakage PBE. The breakage function 
has the following important properties
\begin{align}\label{breakageproperties}
\int_0^x b(u,x)du= \bar{N}(x), \quad \int_0^x u b(u,x)du= x.
\end{align}
The function $\bar{N}(x)$, which may be infinite, denotes the number of fragments obtained from the breakage of particle of size $x$. The second integral shows that the total mass created from the breakage of a particle of size $x$ is again $x$. In aggregation-breakage processes the total number of particles varies in time while the total mass of particles remains conserved. In terms of $f$, the total number of particles and the total mass of particles at time $t\geq 0$ are respectively given by
\begin{align*}
M_0(t):= \int_0^\infty f(t,x)dx, \ \ \  M_1(t):= \int_0^\infty x f(t,x)dx.
\end{align*}
It is easy to show that the total number of particles $M_0(t)$ decreases by aggregation and increases by breakage processes while the total mass $M_1(t)$ does not vary during these events.
For the total mass conservation
\begin{align*}
\int_0^\infty x f(t,x) \dd x = \int_0^\infty x f^{in}(x) \dd x, \ \
t\geq 0 \pk
\end{align*}
holds. However, for some special cases of $\beta$ when it is sufficiently large compared to the selection function $S$, a phenomenon called gelation occurs. In this case the total mass of particles is not conserved, see Escobedo et al. \cite{Escobedo:2003} and further citations for details. \enter

Mathematical results on existence and uniqueness of solutions of
the equation (\ref{pbe1}) and further citations can be found in McLaughlin et al.\ \cite{Mclaughlin:1998} and W. Lamb \cite{Lamb:2004} for rather
general aggregation kernels, breakage and selection functions. In our analysis
we consider them to be twice continuously differentiable functions.
The PBEs (\ref{pbe1}) can only be solved analytically for a limited number of simplified problems, see Ziff \cite{Ziff:1991}, Dubovskii et al.\ \cite{Dubovskii:1992} and the references therein.
This certainly leads to the necessity of using numerical methods for
solving general PBEs. Several numerical methods have been introduced to solve the PBEs. Stochastic methods (Monte-Carlo) have been developed, see Lee and Matsoukas \cite{KLee:2000} for solving equations of aggregation with binary breakage. Finite element techniques can be found in Mahoney and Ramkrishna \cite{Mahoney:2002} and the references therein for the equations of simultaneous aggregation, growth and nucleation. Some other numerical techniques are available in the literature such as the method of successive
approximations by D. Ramkrishna \cite{Ramkrishna:2000}, method of moments \cite{Madras:2004,Marchisio:2005}, 
finite volume methods \cite{Motz:2002,Rajesh:M3AS} and sectional methods \cite{J_Kumar:2006_Thesis,Kumar:1996-1,Vanni:2002} to solve such PBEs.\enter

A completely different numerical approach was proposed by Filbet and
Lauren\c{c}ot \cite{Filbet:2004} for solving aggregation PBEs by discretizing a well known mass balance formulation. 
They thereby
introduced an application of the FVS to solve the aggregation problem. Further, Bourgade and Filbet
\cite{Filbet:2007} have extended their scheme to solve the case of binary
aggregation and binary breakage PBEs and gave a convergence proof of approximate solutions in the space 
$L^\infty(0,T;L^1(0,\mathrm{R}))$. For a special case of a uniform mesh they have shown error estimates of first order. The scheme has also been extended to two-dimensional aggregation problems
by Qamar and Warnecke \cite{Qamar:2007}. Finally it has been
observed that the FVS is a good alternative to the methods mentioned above for solving the PBEs due to its automatic mass
conservation property.\enter

Since Bourgade and Filbet have considered aggregation with binary breakage problems on uniform meshes only. 
The objective here is to analyze such a FVS to solve the aggregation with multiple breakage PBEs on general meshes.
We also demonstrate mathematically the missing stability and the convergence analysis of the FVS for simultaneous 
aggregation-breakage PBEs by following Hundsdorfer and Verwer \cite{Hundsdorfer:2003} and Linz \cite{Linz:1975}. 
The mathematical results are verified numerically for several test problems on four different types of uniform and non-uniform grids. \enter

This paper is organized as follows. First, we derive the FVS to solve aggregation-breakage PBEs. Then in 
Section \ref{convergence:section3} some useful definitions and theorems are reviewed from 
\cite{Hundsdorfer:2003, Linz:1975} which are used in further analysis of the method. 
Here we also discuss the consistency and prove the Lipschitz continuity of the numerical fluxes to 
get the convergence results. Later on the convergence analysis is numerically tested for several 
problems in Section \ref{num:resultsfvsaggbrk}. Further, Section \ref{conclusions:fvsaggbrk} summarizes 
some conclusions. At the end of the paper one Appendix is provided which gives a bound on total number of particles for the aggregation-breakage terms.

\section{Finite volume scheme}\label{S:PureBreakFV}
In this section a FVS for solving aggregation-breakage PBEs is discussed. Following Filbet and 
Lauren\c{c}ot \cite{Filbet:2004} for aggregation, a new form
of the breakage PBE is presented in order to apply the
FVS efficiently. Then stability and convergence analysis will be
discussed for the method.
\subsection{Aggregation-breakage PBE in a conservative form}
Writing the aggregation and breakage terms in divergence form enable us to get a precise amount of mass dissipation or conservation. It can be written in a conservative form of mass density $xf(t,x)$ as
\begin{align}\label{E:FormCL}
\frac{\pd \left[x f(t,x)\right]}{\pd t} + \frac{\pd}{\pd x}
\bigg(F^{\text{agg}}(t,x) + F^{\text{brk}}(t,x)\bigg) = 0.
\end{align}
The abbreviations \emph{agg} and \emph{brk} are used for aggregation
and breakage terms respectively. The flux functions $F^{\text{agg}}$
and $F^{\text{brk}}$ are given by
\begin{align}\label{I:Aggfluxexact}
F^{\text{agg}}(t,x) =  \int_{0}^{x}\int_{x-u}^{\infty} u \beta(u,v)
f(t, u) f(t, v) dv du,\quad \text{and}
\end{align}

\begin{align}\label{I:Brkfluxexact}
F^{\text{brk}}(t,x) =  -\int_{x}^{\infty}\int_{0}^{x} u b(u,v) S(v)
f(t, v)du dv.
\end{align}
It should be noted that both forms of aggregation-breakage
PBEs (\ref{pbe1}) and (\ref{E:FormCL}) are interchangeable by using the Leibniz integration rule. The concept of this conservative formulation of the PBE has been used in Tanaka et al.\ \cite{Tanaka:1996} and Makino et al.\ \cite{Makino:1998}. It should also be mentioned that the equation (\ref{E:FormCL}) reduces into the case of pure aggregation or pure breakage process when $F^{\text{brk}}(t,x)$ or $F^{\text{agg}}(t,x)$ is zero, respectively. \enter

In the PBE (\ref{E:FormCL}) the volume
variable $x$ ranges from $0$ to $\infty$. In order to apply a
numerical scheme for the solution of the equation a first step is to
fix a finite computational domain $\Omega:= ]0,x_{\text{max}}]$ for an $0< x_{\text{max}}<\infty$. Hence, for $x\in \Omega$ and time $t\in (0,T]$ where $T<\infty$, the aggregation and the breakage fluxes for the truncated conservation law for $n$, i.e.\ for
\begin{align}\label{E:FormCLtrun}
\frac{\pd \left[x n(t,x)\right]}{\pd t} + \frac{\pd}{\pd x}
\bigg(F^{\text{agg}}(t,x) + F^{\text{brk}}(t,x)\bigg) = 0
\end{align}
are given as
\begin{align}\label{I:Aggfluxexacttrun}
F^{\text{agg}}(t,x) =  \int_{0}^{x}\int_{x-u}^{x_{\text{max}}} u \beta(u,v)
n(t, u) n(t, v) dv du, \quad \text{and}
\end{align}

\begin{align}\label{I:Brkfluxexacttrun}
F^{\text{brk}}(t,x) =  -\int_{x}^{x_{\text{max}}}\int_{0}^{x} u b(u,v) S(v)
n(t, v)du dv.
\end{align}
Here the variable $n(t,x)$ denotes the solution to the truncated equation. We are given with initial data
\begin{align}\label{E:IC}
n(0,x) = f^{\text{in}}(x), \quad x\in \Omega.
\end{align}
For further analysis, all the kinetic parameters $\beta$, $S$ and $b$ are considered to be two times continuously differentiable function, i.e.\
\begin{align}\label{kernelcondition}
\beta, b \in \mathcal{C}^2(]0, x_{\text{max}}]\times ]0, x_{\text{max}}]) \ \text{and}\ \ S\in \mathcal{C}^2(]0, x_{\text{max}}]).
\end{align}
From (\ref{kernelcondition}), there exists some non-negative constants $Q$ and $Q_1$ depending on $x_{\text{max}}$ such that
\begin{align}\label{kernelbound}
\beta(x,y)\leq Q \quad \text{and} \quad b(x,y)S(y)\leq Q_1 \quad \text{for}\quad x,y\in ]0,x_{\text{max}}].
\end{align}

\begin{rem} The formulation we use here is a non-conservative truncation for the pure aggregation operator as
$F^{\text{agg}}(t,x_{\text{max}})\geq 0$ while it is mass conserving for the pure breakage equation, i.e.\ $F^{\text{brk}}(t,x_{\text{max}})=0$.
Hence, the combined formulation (\ref{E:FormCLtrun}) is a non-conservative truncation as used by Bourgade and Filbet \cite{Filbet:2007}. One could make a conservative truncation by replacing $x_{\text{max}}$ by $x_{\text{max}}-u$ in (\ref{I:Aggfluxexacttrun}). This would give $F^{\text{agg}}(t,x_{\text{max}})= 0$. But it describes an artificial interruption of the aggregation process without a real physical justification. With our truncation particles that are too large leave the system. \enter
\end{rem}

\subsection{Numerical discretization} Finite volume methods are a class of discretization
schemes used to solve mainly conservation laws, see LeVeque
\cite{Leveque:2002}. For a semi-discrete scheme, the interval $]0,x_{\text{max}}]$ is discretized into small cells
\begin{align*}
\Lambda_i := ]x_{i-1/2},x_{i+1/2}], \quad \ i= 1,...,I  \pk \quad \text{with}
\end{align*}
\begin{align*}
x_{1/2} = 0, \quad x_{I+1/2}= x_{\text{max}}, \quad \Delta x_i = x_{i+1/2} -
x_{i-1/2} \leq \Delta x,
\end{align*}
where $\Delta x$ is the maximum mesh size. The representative of each size, usually the center of each cell $x_i = (x_{i-1/2}+x_{i+1/2})/2$, is called pivot or grid point.
The FVS has been carried over to the discretization of such equations by instead
of interpreting $\hat{n}_i(t)$ as an approximation to a point value at a grid point, i.e.\ $ n(t, x_i)$, rather taking an
approximation of the cell average of the solution on cell $i$ at
time $t$
\begin{align}\label{numdenapprox}
\hat{n}_i(t)\approx n_i= \frac{1}{\Delta x_i}\int_{x_{i-1/2}}^{x_{i+1/2}}n(t,x){\rm d}x \pp
\end{align}
Integrating the conservation law on a cell in space $\Lambda_i$, the FVS is given as
\cite{Leveque:2002}
\begin{align}\label{E:GenDis}
\frac{x_i d \hat{n}_i(t)}{dt} = -\frac{1}{\Delta x_i}
\bigg[J^{\text{agg}}_{i+1/2}-J^{\text{agg}}_{i-1/2} + J^{\text{brk}}_{i+1/2} - J^{\text{brk}}_{i-1/2}\bigg].
\end{align}
The term $J^{-}_{i+1/2}$ is called \emph{the numerical flux} which is an
appropriate approximation of the truncated continuous flux function $F^{\text{agg}}$ and/or $F^{\text{brk}}$ depending upon the processes under consideration.

In case of a breakage process, the numerical flux may be
approximated from the mass flux $F^{\text{brk}}$ as follows
\begin{align} \label{E:NumericalFluxApproximation}
F^{\text{brk}}(x_{i+1/2}) & =  -\int_{x_{i+1/2}}^{x_{\text{max}}} \int_0^{x_{i+1/2}} u b(u, \epsilon) S(\epsilon)
n(t, \epsilon) \dd u \dd \epsilon \nonumber \\
& =  -\sum_{k=i+1}^{I} \int_{\Lambda_k} S(\epsilon) n(t, \epsilon) \sum_{j=1}^i \int_{\Lambda_j} u b(u, \epsilon) \dd u \dd \epsilon.
\end{align}
Using our assumptions that $S\in \mathcal{C}^2( ]0,x_{\text{max}}])$, $b\in \mathcal{C}^2( ]0,x_{\text{max}} ]\times ]0,x_{\text{max}} ])$ and applying the mid point rule we can rewrite (\ref{E:NumericalFluxApproximation}) as
\begin{align}\label{2}
F^{\text{brk}}(x_{i+1/2}) &= \underbrace{-\sum_{k=i+1}^{I} n_k(t) S(x_k)\Delta x_k \sum_{j=1}^i x_j b(x_j,x_k)\Delta x_j}_{=:J^{\text{brk}}_{i+1/2}(n)} + {\cal O}(\Delta x^2)
\end{align}
Similarly for the aggregation problem,
\begin{align}\label{aggfluxmon26}
F^{\text{agg}}(x_{i+1/2}) = \int_{0}^{x_{i+1/2}}\int_{x_{i+1/2}-u}^{x_{\text{max}}} u \beta(u,v)
n(t, u) n(t, v) dv du.
\end{align}
From Filbet and Lauren\c{c}ot \cite{Filbet:2004}, the above equation can be written as
\begin{align*}
F^{\text{agg}}(x_{i+1/2}) = \sum_{k=1}^{i} (xn)_k \Delta x_k \Bigg(\sum_{j=\alpha_{i,k}}^{I} (xn)_j \int_{\Lambda_j} \frac{\beta(x,x_k)}{x}dx
+& (xn)_{\alpha_{i,k}-1} \int_{x_{i+1/2}-x_k}^{x_{\alpha_{i,k}-1/2}}\frac{\beta(x,x_k)}{x}dx \Bigg)\\&+{\cal O}(\Delta x^2).
\end{align*}
Here, the parameter $I$ denotes the number of cells. The integer
$\alpha_{i,k}$ corresponds to the index of each cell such that
\begin{align}\label{indexagg}
x_{i+1/2} - x_k \in \Lambda_{\alpha_{i,k}-1}.
\end{align}
Applying mid point approximation for the first term and Taylor series expansion of the second term about the point $x_{\alpha_{i,k}-1}$ give with $(xn)_k= x_kn_k$
\begin{align}\label{aggnumflux}
F^{\text{agg}}(x_{i+1/2}) =& \underbrace{\sum_{k=1}^{i} x_k n_k \Delta x_k \Bigg( \sum_{j=\alpha_{i,k}}^{I}n_j
\beta_{j,k} \Delta x_j +
n_{\alpha_{i,k}-1} \beta_{\alpha_{i,k}-1,k}(x_{\alpha_{i,k}-1/2} - (x_{i+1/2}-x_k))\Bigg)}_{=:J^{\text{agg}}_{i+1/2}(n)}\nonumber\\&+{\cal O}(\Delta x^2)
\end{align}

Let us denote the vector ${\bf{{n}}}:= [{n}_1,\ldots,{n}_I]$ obtained by $L^2$ projection of the exact solution $n$ 
into the space of step functions constant on each cell. It is worth to mention that this projection error can easily be shown of second 
order, see remark 3.3.3 in \cite{Rajesh:thesis}. We also define the vectors $$\Delta {\bf{J}}^{\text{agg}}({\bf{n}}):= [\Delta J_{1}^{\text{agg}}({\bf{n}}),\ldots,\Delta J_{I}^{\text{agg}}({\bf{n}})]\quad \ \text{and}\quad \ \Delta {\bf{J}}^{\text{brk}}({\bf{n}}):= [\Delta {J}_{1}^{\text{brk}}({\bf{n}}),\ldots, \Delta{J}_{I}^{\text{brk}}({\bf{n}})]$$
where
\begin{align}\label{naya1}
\Delta J_{i}^{\text{agg}}({\bf{n}})=\frac{1}{x_i \Delta x_i}\left[J^{\text{agg}}_{i+1/2}({\bf{n}})- J^{\text{agg}}_{i-1/2}({\bf{n}})\right],\ \Delta {J}_{i}^{\text{brk}}({\bf{n}})=\frac{1}{x_i \Delta x_i}\left[J^{\text{brk}}_{i+1/2}({\bf{n}})- J^{\text{brk}}_{i-1/2}({\bf{n}})\right].
\end{align}
Substituting the values of $J^{\text{agg}}_{i+1/2}$ and $J^{\text{brk}}_{i+1/2}$ from equations (\ref{aggnumflux}) and (\ref{2}), respectively to get
\begin{align}\label{fluxcombagg}
\Delta x_i \Delta {J}_{i}^{\text{agg}}({\bf{n}}) &= \sum_{k=1}^{i-1} \frac{x_k}{x_i} n_k\Delta x_k \bigg(-\sum_{j=\alpha_{i-1,k}}^{\alpha_{i,k}-1} n_j \beta_{j,k} \Delta x_j + \beta_{\alpha_{i,k}-1,k} n_{\alpha_{i,k}-1} (x_{\alpha_{i,k}-1/2} - (x_{i+1/2}-x_k))\nonumber\\&- \beta_{\alpha_{i-1,k}-1,k} n_{\alpha_{i-1,k}-1}(x_{\alpha_{i-1,k}-1/2} - (x_{i-1/2}-x_k))\bigg)+  n_i\Delta x_i \Bigg( \sum_{j=\alpha_{i,i}}^{I}n_j
\beta_{j,i} \Delta x_j \nonumber\\&+
n_{\alpha_{i,i}-1} \beta_{\alpha_{i,i}-1,i} (x_{\alpha_{i,i}-1/2} - (x_{i+1/2}-x_i))\Bigg)
\end{align}
and
\begin{align}\label{fluxcombbrk}
\Delta x_i \Delta {J}_{i}^{\text{brk}}({\bf{n}}) = - \sum_{k=i+1}^I S(x_k) n_k \Delta x_k  b(x_i,x_k)\Delta x_i + S(x_i) n_i \Delta x_i \sum_{j=1}^{i-1} \frac{x_j}{x_i} b(x_j,x_i)\Delta x_j.
\end{align}
By denoting the vector ${\bf{\hat{n}}}:= [\hat{n}_1,\ldots,\hat{n}_I]$ for the numerical approximations of the average values of $n(t,x)$, the equation (\ref{E:GenDis}) can be rewritten as
\begin{align}\label{eq.FVS_SemiDiscrete1}
\frac{d {\bf{\hat{n}}}(t)}{d t}= - \left[\Delta {\bf{J}}^{\text{agg}}({\bf{\hat{n}}}) +\Delta {\bf{J}}^{\text{brk}}({\bf{\hat{n}}}) \right] = {\bf{J}}({\bf{\hat{n}}}) \pp
\end{align}

In order to retain the overall high accuracy, the semi-discrete scheme (\ref{eq.FVS_SemiDiscrete1}) can be combined
with any higher order time integration method. It is worth to mention here that dealing with the pure cases of aggregation or breakage is easy 
by setting one of the two numerical fluxes is zero. 

\section{Convergence analysis}\label{convergence:section3}
Before discussing the convergence of the semi-discrete scheme, let us review some useful definitions and theorems 
from \cite{Hundsdorfer:2003, Linz:1975} that will be used in the subsequent analysis. Let $\|\cdot\|$ denote the discrete $L^1$ norm on $\mathbb{R}^I$ that is defined as
\begin{align}\label{norm}
\|{\bf{\hat{n}}}(t)\| = \sum_{i=1}^I |\hat{n}_i(t)|\Delta x_i.
\end{align}
In this work, we deal with this norm by interpreting the discrete data as step functions.
\begin{defin}
The \textbf{spatial truncation error} is defined by the residual left by substituting the exact solution ${\bf{n}}(t)=[n_1(t),
\ldots, n_I(t)]$ into equation (\ref{eq.FVS_SemiDiscrete1}) as
\begin{align}\label{E:SpatialTruncationError}
{\boldsymbol{\sigma}}(t) = \frac{d {\bf{n}}(t)}{d t} + (\Delta {\bf{J}}^{\text{agg}}({\bf{n}})+ \Delta {\bf{J}}^{\text{brk}}({\bf{n}})) \pp
\end{align}
The scheme (\ref{eq.FVS_SemiDiscrete1}) is called consistent of order $p$ if, for $\Delta x \to 0$,
\begin{align*}
\| {\boldsymbol{\sigma}}(t) \| = {\cal O}(\Delta x^p)\pk \quad \mbox{uniformly for all } t \pk \quad 0 \leq t \leq T \pp
\end{align*}
\end{defin}

\begin{defin}\label{chura3}
The \textbf{global discretization error} is defined by ${\boldsymbol{\epsilon}}(t) = {\bf{n}}(t) - {\bf{\hat{n}}}(t)$. The scheme
(\ref{eq.FVS_SemiDiscrete1}) is called convergent of order $p$ if, for $\Delta x \to 0$,
\begin{align*}
\| {\boldsymbol{\epsilon}}(t) \| = {\cal O}(\Delta x^p)\pk \quad \mbox{uniformly for all } t \pk \quad 0 \leq t \leq T \pp
\end{align*}
\end{defin}

It is important that our numerical solution remains non-negative for all times. This is guaranteed by the next well 
known theorem where we have ${\bf{\hat{M}}}\geq 0$ for a vector ${\bf{\hat{M}}}\in \mathbb{R}^I$ iff all its components are non-negative.
\begin{thm} (Hundsdorfer and Verwer \cite[Chap. 1, Theorem 7.1]{Hundsdorfer:2003}). Suppose that $\Delta {\bf{J}}^{\text{agg}}({\bf{\hat{n}}})$ and $\Delta {\bf{J}}^{\text{brk}}({\bf{\hat{n}}})$ are continuous and satisfy the Lipschitz conditions
\begin{align*}
\|\Delta {\bf{J}}^{\text{agg}}({\bf{\hat{n}}})-\Delta {\bf{J}}^{\text{agg}}({\bf{\hat{m}}})\| \leq L_1 \|{\bf{\hat{n}}}-{\bf{\hat{m}}}\|  \ \ \ \ \text{for all}\ \ \ \ {\bf{\hat{n}}}, {\bf{\hat{m}}} \in \mathbb{R}^I
\end{align*}
and
\begin{align*}
\|\Delta {\bf{J}}^{\text{brk}}({\bf{\hat{n}}})-\Delta {\bf{J}}^{\text{brk}}({\bf{\hat{m}}})\| \leq L_2 \|{\bf{\hat{n}}}-{\bf{\hat{m}}}\|  \ \ \ \ \text{for all}\ \ \ \ {\bf{\hat{n}}}, {\bf{\hat{m}}} \in \mathbb{R}^I.
\end{align*}
Then the solution of the semi-discrete system (\ref{E:GenDis}) is non-negative if and only if for any vector ${\bf{\hat{n}}}\in \mathbb{R}^I$ and all $i= 1, \ldots, I$ and $t\geq 0$,
$${\bf{\hat{n}}}\geq 0,\quad \hat{n}_i=0 \quad \Longrightarrow \quad J_i({\bf{\hat{n}}}) \geq 0.$$
\end{thm}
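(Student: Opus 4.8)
The plan is to recognize this as a classical invariance statement for the nonnegative orthant $\mathbb{R}^I_{\geq 0}$ under the flow of the autonomous system $\frac{d\hat{\mathbf{n}}}{dt} = \mathbf{J}(\hat{\mathbf{n}})$, where the assumed Lipschitz continuity of $\Delta\mathbf{J}^{\text{agg}}$ and $\Delta\mathbf{J}^{\text{brk}}$ secures, via Picard--Lindel\"of, the existence, uniqueness and continuous dependence of solutions that the argument needs. The implication $\hat{\mathbf{n}}\geq 0,\ \hat{n}_i=0 \Rightarrow J_i(\hat{\mathbf{n}})\geq 0$ is a subtangent (Nagumo-type) condition asserting that on each boundary face $\{\hat{n}_i=0\}$ the vector field has nonnegative $i$-th component, so that it never points strictly out of the orthant. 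I would establish the two directions separately.

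For necessity I would argue cheaply by contradiction: assuming that nonnegative data always give nonnegative solutions, pick any $\hat{\mathbf{n}}\geq 0$ with some $\hat{n}_i=0$ as initial value. Since $\hat{n}_i(0)=0$ and $\hat{n}_i(t)\geq 0$ for $t\geq 0$, the quotient $\hat{n}_i(t)/t\geq 0$ for $t>0$, so the right derivative at $t=0$ is nonnegative; but that derivative equals $J_i(\hat{\mathbf{n}})$ by the ODE, giving $J_i(\hat{\mathbf{n}})\geq 0$.

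For sufficiency, the substantive direction, I would use a strict-perturbation trick. Fix $\varepsilon>0$ and consider the perturbed field $\mathbf{J}^\varepsilon(\hat{\mathbf{n}}):=\mathbf{J}(\hat{\mathbf{n}})+\varepsilon\mathbf{1}$ with $\mathbf{1}=(1,\ldots,1)$; by Lipschitz continuity its solution $\hat{\mathbf{n}}^\varepsilon$ exists and depends continuously on $\varepsilon$. I claim $\hat{\mathbf{n}}^\varepsilon(t)$ stays strictly positive. If not, let $t^\ast$ be the first time some component vanishes, say $\hat{n}_i^\varepsilon(t^\ast)=0$ while $\hat{\mathbf{n}}^\varepsilon(t^\ast)\geq 0$. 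Approaching $t^\ast$ from the left the $i$-th component decreases to zero, so $\frac{d\hat{n}_i^\varepsilon}{dt}(t^\ast)\leq 0$; yet the positivity condition applied at $\hat{\mathbf{n}}^\varepsilon(t^\ast)$ gives $J_i(\hat{\mathbf{n}}^\varepsilon(t^\ast))\geq 0$, whence $\frac{d\hat{n}_i^\varepsilon}{dt}(t^\ast)=J_i(\hat{\mathbf{n}}^\varepsilon(t^\ast))+\varepsilon\geq\varepsilon>0$, a contradiction. Letting $\varepsilon\to 0$ and invoking continuous dependence then yields $\hat{\mathbf{n}}(t)\geq 0$.

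The main obstacle is making the first-exit-time argument in the sufficiency part fully rigorous: one must ensure $t^\ast$ is well defined, correctly deduce the sign of the one-sided derivative there, and justify passing to the limit $\varepsilon\to 0$ uniformly on $[0,T]$, all of which rest on the Lipschitz hypotheses. Since this invariance principle is exactly the content of Theorem 7.1 in Hundsdorfer and Verwer \cite{Hundsdorfer:2003}, I would cite that result for the technical closure rather than reproduce the full boundary analysis.
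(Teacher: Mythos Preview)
Your proof sketch is correct and follows the standard Nagumo-type invariance argument (perturbation by $\varepsilon\mathbf{1}$, first-exit-time contradiction, continuous dependence in the limit). Note, however, that the paper does not actually prove this theorem at all: it is stated purely as a quotation of Theorem~7.1 from Hundsdorfer and Verwer \cite{Hundsdorfer:2003}, with no accompanying argument. So there is no ``paper's own proof'' to compare against---you have supplied more than the authors did, and your concluding remark that one would ultimately cite \cite{Hundsdorfer:2003} for the technical closure is exactly what the paper does from the outset.
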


Now we state a useful theorem from Linz \cite{Linz:1975} which we use to show that the FVS is convergent.
\begin{thm}\label{aaj2} Let us assume that a Lipschitz condition on ${\bf{J(n)}}$ is satisfied for $0\leq t\leq T$ and for all ${\bf{n}}, {\bf{\hat{n}}}\in \mathbb{R}^I$ where ${\bf{n}}$ and ${\bf{\hat{n}}}$ are the projected exact and numerical solutions defined in (\ref{E:FormCLtrun}) and (\ref{eq.FVS_SemiDiscrete1}), respectively. More precisely there exists a Lipschitz constant $L < \infty$ such that
\begin{align}\label{lipcond}
\|{\bf{J(n)}}- {\bf{J(\hat{n})}}\|\leq L\, \|{\bf{n}}-{\bf{\hat{n}}}\|,
\end{align}
holds. Then a consistent discretization method is also convergent and the convergence is of the same order as the consistency.
\end{thm}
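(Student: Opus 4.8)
The plan is to reduce the statement to a standard stability-plus-consistency (Lax-type) estimate and to close it with a Gronwall argument. First I would write down the evolution equation for the error. By the definition of the spatial truncation error in \eqref{E:SpatialTruncationError}, the projected exact solution satisfies $\frac{d\mathbf{n}(t)}{dt} = \mathbf{J}(\mathbf{n}) + \boldsymbol{\sigma}(t)$, while the numerical solution obeys $\frac{d\hat{\mathbf{n}}(t)}{dt} = \mathbf{J}(\hat{\mathbf{n}})$ by \eqref{eq.FVS_SemiDiscrete1}. Subtracting these and using $\boldsymbol{\epsilon}(t) = \mathbf{n}(t) - \hat{\mathbf{n}}(t)$ from Definition \ref{chura3} gives the error equation
\[
\frac{d\boldsymbol{\epsilon}(t)}{dt} = \mathbf{J}(\mathbf{n}) - \mathbf{J}(\hat{\mathbf{n}}) + \boldsymbol{\sigma}(t).
\]

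Next I would pass to the componentwise integral form and take the discrete $L^1$ norm \eqref{norm}. Since both $\mathbf{n}(0)$ and $\hat{\mathbf{n}}(0)$ are the $L^2$ projection of the same initial datum $f^{\text{in}}$, the initial error vanishes, $\boldsymbol{\epsilon}(0)=0$; at worst it is of order $p$ by the projection estimate cited after \eqref{aggnumflux}. Applying the triangle inequality, the assumed Lipschitz bound \eqref{lipcond} in the form $\|\mathbf{J}(\mathbf{n})-\mathbf{J}(\hat{\mathbf{n}})\| \leq L\|\boldsymbol{\epsilon}(s)\|$, and the consistency hypothesis $\|\boldsymbol{\sigma}(s)\| \leq C\Delta x^p$ yields, for all $0 \leq t \leq T$,
\[
\|\boldsymbol{\epsilon}(t)\| \leq \|\boldsymbol{\epsilon}(0)\| + \int_0^t \big( L\,\|\boldsymbol{\epsilon}(s)\| + C\Delta x^p \big)\, ds \leq C\,T\,\Delta x^p + L\int_0^t \|\boldsymbol{\epsilon}(s)\|\, ds.
\]

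Finally I would invoke Gronwall's inequality to absorb the integral term, obtaining $\|\boldsymbol{\epsilon}(t)\| \leq C\,T\,\Delta x^p\, e^{LT}$ uniformly on $[0,T]$. As the right-hand side is $\mathcal{O}(\Delta x^p)$ with a constant independent of $t$, this is precisely convergence of order $p$ in the sense of Definition \ref{chura3}, matching the consistency order and proving the claim. I do not anticipate a genuine obstacle here, since the essential analytic content, namely the Lipschitz continuity \eqref{lipcond} of the combined numerical flux, is assumed as a hypothesis (it is exactly what the flux representations \eqref{fluxcombagg}--\eqref{fluxcombbrk} are arranged to supply elsewhere in the paper). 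The only point requiring care is that $t \mapsto \|\boldsymbol{\epsilon}(t)\|$ need not be differentiable because of the absolute values in \eqref{norm}; this is why I would work with the integral form of the error equation rather than differentiate the norm directly, and then apply Gronwall to the resulting integral inequality.
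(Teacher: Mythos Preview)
Your argument is correct: the error equation, the integral form with the Lipschitz bound \eqref{lipcond} and the consistency estimate, followed by Gronwall, is exactly the standard route and yields $\|\boldsymbol{\epsilon}(t)\|=\mathcal{O}(\Delta x^p)$ uniformly on $[0,T]$. The paper itself does not spell this out at all; its entire proof of Theorem~\ref{aaj2} is a one-line citation to the more general result in Linz~\cite{Linz:1975}, so your write-up is in fact more explicit than the paper while following the same underlying Lax-type mechanism.
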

\begin{proof}
A more general result is proven in Linz \cite{Linz:1975}.
\end{proof}

Due to Theorem \ref{aaj2}, for the convergence of our scheme it remains to show that the method is consistent and the Lipschitz condition (\ref{lipcond}) is satisfied by the fluxes.
\subsection{Consistency}
The following lemma gives the consistency order of the FVS for aggregation-breakage PBEs.
\begin{lem}\label{conserror}
Consider the function $S \in \mathcal{C}^2(]0, x_{\text{max}}])$ and $b, \beta \in \mathcal{C}^2(]0, x_{\text{max}}]\times ]0, x_{\text{max}}])$. Then, for any family of meshes, the consistency of the semi-discrete scheme
(\ref{eq.FVS_SemiDiscrete1}) is of second order for the pure breakage process, 
i.e.\ with $\Delta {\bf{J}}^{\text{agg}}({\bf{\hat{n}}})=0$. For the aggregation and coupled processes, 
the scheme is second order consistent on uniform and non-uniform smooth meshes while on oscillatory and random meshes 
it is first order consistent.
\end{lem}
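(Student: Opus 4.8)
The plan is to substitute the projected exact solution $\mathbf{n}(t)$ into the scheme and estimate the residual ${\boldsymbol{\sigma}}(t)$ of (\ref{E:SpatialTruncationError}) componentwise, reducing everything to the \emph{difference} of the interface flux errors. Integrating the conservation law (\ref{E:FormCLtrun}) over a cell $\Lambda_i$ gives exactly $\frac{d}{dt}\int_{\Lambda_i} x n\,dx = -[F(x_{i+1/2})-F(x_{i-1/2})]$ with $F=F^{\text{agg}}+F^{\text{brk}}$, and since $x_i$ is the cell midpoint one has $\int_{\Lambda_i} x n\,dx = x_i n_i\Delta x_i + {\cal O}(\Delta x_i^3)$, so that $x_i\Delta x_i\,\frac{dn_i}{dt} = -[F(x_{i+1/2})-F(x_{i-1/2})]+{\cal O}(\Delta x_i^3)$ uniformly in $t$. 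Writing $E_{i\pm1/2}:=J_{i\pm1/2}-F(x_{i\pm1/2})$ for the flux-approximation errors and using (\ref{naya1}), this yields $x_i\Delta x_i\,\sigma_i = (E_{i+1/2}-E_{i-1/2})+{\cal O}(\Delta x_i^3)$, hence by (\ref{norm}) $\|{\boldsymbol{\sigma}}\|\le \sum_i\frac{1}{x_i}|E_{i+1/2}-E_{i-1/2}| + \sum_i\frac{{\cal O}(\Delta x_i^3)}{x_i}$. The second sum is already ${\cal O}(\Delta x^2)$ (up to a harmless logarithmic factor from the weight $1/x_i$ near $x=0$), so the whole matter reduces to bounding the discrete difference of the interface errors, which I treat separately for the two processes.

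For pure breakage the numerical flux (\ref{2}) is a clean midpoint quadrature whose integration limits $0$, $x_{i+1/2}$ and $x_{\text{max}}$ all coincide with cell boundaries. Passing from $i-1/2$ to $i+1/2$ therefore moves a single boundary, so that $F^{\text{brk}}(x_{i+1/2})-F^{\text{brk}}(x_{i-1/2})$ and $J^{\text{brk}}_{i+1/2}-J^{\text{brk}}_{i-1/2}$ differ only by \emph{single-cell} midpoint errors. I would make this quantitative using $x_j b(x_j,x_k)\Delta x_j-\int_{\Lambda_j} u b(u,x_k)\,du={\cal O}(\Delta x_j^3)$, the cell-average relation $n_k=n(t,x_k)+{\cal O}(\Delta x_k^2)$, and the bounds (\ref{kernelbound}), to obtain $E^{\text{brk}}_{i+1/2}-E^{\text{brk}}_{i-1/2}={\cal O}(\Delta x_i^3)$ on an \emph{arbitrary} mesh. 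Summing $\frac{1}{x_i}{\cal O}(\Delta x_i^3)$ then gives $\|{\boldsymbol{\sigma}}^{\text{brk}}\|={\cal O}(\Delta x^2)$ with no regularity assumption on the grid, which is the first assertion of the lemma.

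For aggregation the obstruction is the lower limit $x_{i+1/2}-u$ in (\ref{aggfluxmon26}): after the midpoint step $u\to x_k$ it becomes $x_{i+1/2}-x_k$, which in general does \emph{not} lie on a grid boundary but inside the cell $\Lambda_{\alpha_{i,k}-1}$ fixed by (\ref{indexagg}). The full cells $j\ge\alpha_{i,k}$ are again handled by a clean midpoint rule, so the sensitive quantity is the partial-cell term $n_{\alpha_{i,k}-1}\beta_{\alpha_{i,k}-1,k}\,(x_{\alpha_{i,k}-1/2}-(x_{i+1/2}-x_k))$ of (\ref{aggnumflux}). This replaces the exact partial integral by the value of the integrand $\beta(\cdot,x_k)\,n$ at the cell centre $x_{\alpha_{i,k}-1}$ times the partial length; a Taylor expansion about $x_{\alpha_{i,k}-1}$ shows its error is $\tfrac12\,\partial_x[\beta(\cdot,x_k)n][(\Delta x_{\alpha_{i,k}-1}/2)^2-(x_{i+1/2}-x_k-x_{\alpha_{i,k}-1})^2]+{\cal O}(\Delta x^3)$, i.e.\ only ${\cal O}(\Delta x^2)$ per index $k$, because the cell centre is not the midpoint of the partial subinterval. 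Summation over $k$ still gives a second-order flux $E^{\text{agg}}_{i+1/2}={\cal O}(\Delta x^2)$, but for the truncation error I need the difference $E^{\text{agg}}_{i+1/2}-E^{\text{agg}}_{i-1/2}$.

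The decisive step is to track the $i$-dependence of this partial-cell error for fixed $k$. The flux difference (\ref{fluxcombagg}) localises it to the few cells between $x_{i-1/2}-x_k$ and $x_{i+1/2}-x_k$, so the full-cell contributions telescope to ${\cal O}(\Delta x^3)$ and only the fractional position of $x_{i+1/2}-x_k$ inside $\Lambda_{\alpha_{i,k}-1}$ survives. On a uniform mesh this fractional position is independent of $i$, and on a smooth mesh -- one generated by a fixed smooth monotone map, so that $\Delta x_{i+1}-\Delta x_i={\cal O}(\Delta x^2)$ -- it drifts by only ${\cal O}(\Delta x)$ per step; in both cases the partial-cell error is a smooth function of $i$, its difference carries an extra factor ${\cal O}(\Delta x)$, and one gets $E^{\text{agg}}_{i+1/2}-E^{\text{agg}}_{i-1/2}={\cal O}(\Delta x^3)$ and hence $\|{\boldsymbol{\sigma}}\|={\cal O}(\Delta x^2)$. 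On oscillatory or random meshes neighbouring widths differ by ${\cal O}(\Delta x)$, the fractional position jumps by ${\cal O}(1)$, the extra factor is lost, the difference is only ${\cal O}(\Delta x^2)$, and summation leaves $\|{\boldsymbol{\sigma}}\|={\cal O}(\Delta x)$. I expect the main obstacle to be exactly this partial-cell bookkeeping: making the smooth-mesh cancellation rigorous (bounding the drift of the fractional offset through the grid map and summing the telescoping remainders uniformly in $t$), fixing a workable definition of smooth versus oscillatory/random meshes, and absorbing the mild logarithmic growth of the weight $1/x_i$ as $x\to0$.
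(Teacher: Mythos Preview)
Your proposal is correct and follows essentially the same approach as the paper: reduce the truncation error to the interface-flux error difference, dispose of breakage via straightforward midpoint-rule estimates valid on any mesh, and for aggregation isolate the partial-cell contribution coming from the non-aligned lower limit $x_{i+1/2}-x_k$ in (\ref{aggfluxmon26}), then show that its $i$-difference gains an extra factor of $\Delta x$ on uniform and smooth meshes but not on oscillatory or random ones. The only notable differences are that the paper carries out the smooth-mesh cancellation explicitly for the geometric grid $x_{i+1/2}=r\,x_{i-1/2}$ via logarithms rather than arguing generally from $\Delta x_{i+1}-\Delta x_i={\cal O}(\Delta x^2)$, and that it silently ignores the $1/x_i$ weight near $x=0$ that you flag; your observation that this produces at worst a logarithmic correction is the right way to handle it.
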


\begin{proof} The spatial truncation error (\ref{E:SpatialTruncationError}) is given by
\begin{align}\label{abhi1}
\sigma_i(t)  = \frac{d n_i(t)}{d t} + (\Delta {J_i}^{\text{agg}}({\bf{n}})+ \Delta {J_i}^{\text{brk}}({\bf{n}})).
\end{align}
Integrating (\ref{E:FormCLtrun}) over $\Lambda_i$ and applying the mid-point rule in the time derivative term, we interpret $$\frac{d n_i(t)}{d t}= \frac{-1}{x_i \Delta x_i}\bigg[F^{\text{agg}}(x_{i+1/2}) - F^{\text{agg}}(x_{i-1/2}) +F^{\text{brk}}(x_{i+1/2}) - F^{\text{brk}}(x_{i-1/2}) \bigg]+ {\cal O}(\Delta x^2).$$ Substituting this into the equation (\ref{abhi1}) and using (\ref{naya1}) give the following form
\begin{align}\label{consisthurs}
\sigma_i(t)  = \frac{-1}{x_i \Delta x_i}\bigg[&F^{\text{agg}}(x_{i+1/2}) - F^{\text{agg}}(x_{i-1/2}) - J^{\text{agg}}_{i+1/2}({\bf{n}}) + J^{\text{agg}}_{i-1/2}({\bf{n}})\nonumber\\&+F^{\text{brk}}(x_{i+1/2})- F^{\text{brk}}(x_{i-1/2}) - J^{\text{brk}}_{i+1/2}({\bf{n}}) + J^{\text{brk}}_{i-1/2}({\bf{n}})\bigg] + {\cal O}(\Delta x^2) \nonumber\\= \sigma_i^{\text{agg}}(t)&+ \sigma_i^{\text{brk}}(t)+ {\cal O}(\Delta x^2)\pp
\end{align}
Let us now begin with
\begin{align*}
F^{\text{brk}}(x_{i+1/2}) - F^{\text{brk}}(x_{i-1/2}) = - \Bigg( & \sum_{k=i+1}^{I} \int_{\Lambda_k} S(\epsilon) n(t, \epsilon)
\int_0^{x_{i+1/2}} u b(u, \epsilon) \dd u \dd \epsilon \nonumber \\ & - \sum_{k=i}^{I} \int_{\Lambda_k}
S(\epsilon) n(t, \epsilon) \int_0^{x_{i-1/2}} u b(u, \epsilon) \dd u \dd \epsilon \Bigg).
\end{align*}
We now use Taylor series expansion of the functions $\mathcal{K}_{x_{i\pm 1/2}}(\epsilon):=n(t, \epsilon) \int_0^{x_{i\pm 1/2}} u b(u,\epsilon) \dd u$ about $x_k$ and further rearrangement of terms yield $\sigma_i^{\text{brk}}(t)$ as
\begin{align*}
\sigma_i^{\text{brk}}(t) = \frac{1}{x_i \Delta x_i} \Bigg( \sum_{k=i+1}^{I} &\left[\mathcal{K}'_{x_{i+1/2}}(x_k) - \mathcal{K}'_{x_{i-1/2}}(x_k)\right] \int_{\Lambda_k}
S(\epsilon) (\epsilon - x_k)\dd \epsilon \\ & - \mathcal{K}'_{x_{i-1/2}}(x_i) \int_{\Lambda_i}
S(\epsilon) (\epsilon - x_i)\dd \epsilon + {\cal O}(\Delta x^3) \Bigg).
\end{align*}
Applying the mid-point rule, it should be noted that
\begin{align*}
\int_{\Lambda_k} S(\epsilon) (\epsilon - x_k)\dd \epsilon = {\cal O}(\Delta x^3) \quad \text{and} \quad \mathcal{K}'_{x_{i+1/2}}(x_k) - \mathcal{K}'_{x_{i-1/2}}(x_k) = {\cal O}(\Delta x) \pp
\end{align*}
Thus we obtain $\sigma_i^{\text{brk}}(t) =  {\cal O}(\Delta x^2).$ Hence, for the pure breakage process, the consistency of the semi-discrete scheme (\ref{eq.FVS_SemiDiscrete1}) is two which is determined by using (\ref{norm}) as
\begin{align*}
\| {\boldsymbol{\sigma}}(t) \| =  \sum_{i=1}^{I} |\sigma_i^{\text{brk}}(t)|\Delta x_i =  {\cal O}(\Delta x^2) \pk
\end{align*}
independently of the type of meshes.\enter

Due to the non-linearity of the aggregation problem, it is not easy to determine the consistency order on general 
meshes and therefore, we evaluate it on various meshes separately. The results can be combined to the results of breakage process to give the consistency of the coupled processes. We know from (\ref{aggfluxmon26})
\begin{align*}
F^{\text{agg}}(x_{i+1/2}) - F^{\text{agg}}(x_{i-1/2}) = - \Bigg( & \sum_{j=1}^{i} \int_{\Lambda_j} u\, n(t,u) \int_{x_{i+1/2}-u}^{x_{\text{max}}} \beta(u,v)n(t,v)dv du\\& - \sum_{j=1}^{i-1} \int_{\Lambda_j} u\, n(t,u) \int_{x_{i-1/2}-u}^{x_{\text{max}}} \beta(u,v)n(t,v)dv du \Bigg).
\end{align*}
Define $\mathcal{L}_{x_{i\pm 1/2}}(u):= n(t,u) \int_{x_{i\pm 1/2}-u}^{x_{\text{max}}}  \beta(u,v)n(t,v)dv$. Taylor series expansion of the functions $\mathcal{L}_{x_{i\pm 1/2}}(u)$ about $x_j$ gives
\begin{align}\label{tue2707}
F^{\text{agg}}(x_{i+1/2}) - F^{\text{agg}}(x_{i-1/2}) =& \Bigg( \sum_{j=1}^{i} \int_{\Lambda_j} u \left(\mathcal{L}_{x_{i+ 1/2}}(x_j)+ (u-x_j)\mathcal{L}_{x_{i+ 1/2}}^{'}(x_j)\right) du\nonumber\\ - \sum_{j=1}^{i-1} \int_{\Lambda_j} & u \left(\mathcal{L}_{x_{i- 1/2}}(x_j)+ (u-x_j)\mathcal{L}_{x_{i- 1/2}}^{'}(x_j)\right) du \Bigg)+ {\cal O}(\Delta x^3).
\end{align}
Applying the mid-point rule, it should again be noted that
\begin{align*}
\int_{\Lambda_j} u (u -x_j) \dd u = {\cal O}(\Delta x^3) \quad \text{and} \quad \mathcal{L}'_{x_{i+1/2}}(x_j) - \mathcal{L}'_{x_{i-1/2}}(x_j) = {\cal O}(\Delta x) \pp
\end{align*}
Therefore, by defining $LHS:= F^{\text{agg}}(x_{i+1/2}) - F^{\text{agg}}(x_{i-1/2})$, the equation (\ref{tue2707}) reduces to
\begin{align*}
LHS= \Bigg( \sum_{j=1}^{i} \int_{\Lambda_j} u \mathcal{L}_{x_{i+ 1/2}}(x_j) du- \sum_{j=1}^{i-1} \int_{\Lambda_j} u \mathcal{L}_{x_{i- 1/2}}(x_j) du \Bigg)+ {\cal O}(\Delta x^3).
\end{align*} 
Substituting the values of $\mathcal{L}_{x_{i\pm 1/2}}(x_j)$ yield (leaving the third order terms) 
\begin{align*}
LHS= \Bigg( \underbrace{\sum_{j=1}^{i} \int_{\Lambda_j} u n_j \int_{x_{i+ 1/2}-x_j}^{x_{\text{max}}}  \beta(x_j,v)n(t,v)dv du}_{I_1}- \underbrace{\sum_{j=1}^{i-1} \int_{\Lambda_j} u n_j \int_{x_{i-1/2}-x_j}^{x_{\text{max}}} \beta(x_j,v)n(t,v)dv du}_{I_2} \Bigg).
\end{align*}
Now, $I_1$ is equivalent to
\begin{align*}
I_1=& \sum_{j=1}^{i} \int_{\Lambda_j} u n_j \left[\int_{x_{i+ 1/2}-x_j}^{x_{\alpha_{i,j}-1/2}} + \sum_{k=\alpha_{i,j}}^I \int_{\Lambda_k} \right] \beta(x_j,v)n(t,v)dv du.
\end{align*}
Applying the mid-point approximation for the second term, we figure out
\begin{align*}
I_1=& \sum_{j=1}^{i} x_j n_j \Delta x_j \bigg[\int_{x_{i+ 1/2}-x_j}^{x_{\alpha_{i,j}-1/2}}\beta(x_j,v)n(t,v)dv \\& + \sum_{k=\alpha_{i,j}}^I \beta_{j,k} n_k \Delta x_k+  \sum_{k=\alpha_{i,j}}^I \int_{\Lambda_k} (v-x_k)^2/2 (\beta(x_j,v)n(t,v))^{''}\bigg]dv+ {\cal O}(\Delta x^3).
\end{align*}
Similarly, we estimate
\begin{align*}
I_2=& \sum_{j=1}^{i-1} x_j n_j \Delta x_j \bigg[\int_{x_{i-1/2}-x_j}^{x_{\alpha_{i-1,j}-1/2}}\beta(x_j,v)n(t,v)dv\\& + \sum_{k=\alpha_{i-1,j}}^I \beta_{j,k} n_k \Delta x_k+  \sum_{k=\alpha_{i-1,j}}^I \int_{\Lambda_k} (v-x_k)^2/2 (\beta(x_j,v)n(t,v))^{''}\bigg]dv+ {\cal O}(\Delta x^3).
\end{align*}
Subtracting the third term from $I_2$ to $I_1$ gives
\begin{align*}
\bigg[\sum_{j=1}^{i} \sum_{k=\alpha_{i,j}}^I - \sum_{j=1}^{i-1} \sum_{k=\alpha_{i-1,j}}^I \bigg] x_j n_j \Delta x_j \int_{\Lambda_k} (v-x_k)^2/2 (\beta(x_j,v)n(t,v))^{''}dv= \\ \bigg[-\sum_{j=1}^{i-1} \sum_{k=\alpha_{i-1,j}}^{k=\alpha_{i,j}-1}\bigg] x_j n_j \Delta x_j \int_{\Lambda_k} (v-x_k)^2/2 (\beta(x_j,v)n(t,v))^{''}dv+ {\cal O}(\Delta x^3).
\end{align*}
By using Lemma \ref{aaj1} which is stated in the next section, the summation over $k$ is finite in this term. Hence, 
the rhs of this equation becomes of order ${\cal O}(\Delta x^3)$ and can be omitted. Therefore,
\begin{align*}
LHS= \sum_{j=1}^{i} x_j n_j \Delta x_j \bigg[\underbrace{\int_{x_{i+ 1/2}-x_j}^{x_{\alpha_{i,j}-1/2}}\beta(x_j,v)n(t,v)dv}_{I_3} + \sum_{k=\alpha_{i,j}}^I \beta_{j,k} n_k \Delta x_k\bigg]\\- \sum_{j=1}^{i-1} x_j n_j \Delta x_j   \bigg[\underbrace{\int_{x_{i-1/2}-x_j}^{x_{\alpha_{i-1,j}-1/2}}\beta(x_j,v)n(t,v)dv}_{I_4} + \sum_{k=\alpha_{i-1,j}}^I \beta_{j,k} n_k \Delta x_k\bigg] \Bigg)+{\cal O}(\Delta x^3).
\end{align*}
Open the Taylor series about the points $x_{\alpha_{i,j}-1}$ in $I_3$ and $x_{\alpha_{i-1,j}-1}$ in $I_4$ as well as by using the relation (\ref{aggnumflux}), we finally obtain
\begin{align*}
LHS=\Bigg( J^{\text{agg}}_{i+1/2}+\sum_{j=1}^{i} x_j n_j \Delta x_j \int_{x_{i+ 1/2}-x_j}^{x_{\alpha_{i,j}-1/2}} (v-x_{\alpha_{i,j}-1})(\beta(x_j,v)n(t,v))^{'}\lvert_{v=x_{\alpha_{i,j}-1}} dv\\ -J^{\text{agg}}_{i-1/2}- \sum_{j=1}^{i-1}x_j n_j \Delta x_j\int_{x_{i- 1/2}-x_j}^{x_{\alpha_{i-1,j}-1/2}} (v-x_{\alpha_{i-1,j}-1})(\beta(x_j,v)n(t,v))^{'}\lvert_{v=x_{\alpha_{i-1,j}-1}} dv\Bigg)+{\cal O}(\Delta x^3).
\end{align*}
Let $f(x_j,v)= \beta(x_j,v)n(t,v)$ and $\frac{\partial f}{\partial v}\lvert_{v = x_{\alpha_{i,j}}} = f'(x_j,x_{\alpha_{i,j}})$. This implies that
\begin{align}\label{finalcalculation}
\sigma_i^{\text{agg}}(t) = \frac{1}{x_i \Delta x_i}& \bigg[\sum_{j=1}^{i} x_j n_j \Delta x_j \int_{x_{i+ 1/2}-x_j}^{x_{\alpha_{i,j}-1/2}} (v-x_{\alpha_{i,j}-1}) f'(x_j,x_{\alpha_{i,j}-1}) dv\nonumber\\ - \sum_{j=1}^{i-1} & x_j n_j \Delta x_j\int_{x_{i- 1/2}-x_j}^{x_{\alpha_{i-1,j}-1/2}} (v-x_{\alpha_{i-1,j}-1})f'(x_j,x_{\alpha_{i-1,j}-1}) dv \bigg] + {\cal O}(\Delta x^2).
\end{align}
Now the consistency order on four different types of meshes are evaluated:
\subsubsection{Uniform mesh}
Let us assume that the first mesh is uniform, i.e.\ $\Delta x_i= \Delta x$ for all $i$. In this case $x_{i+ 1/2}-x_j$ and $x_{\alpha_{i,j}-1}$ become the same and are equal to the pivot point $x_{i-j+1}$. Similarly, 
\begin{align}\label{griduniformreln}
x_{i- 1/2}-x_j=x_{\alpha_{i-1,j}-1} =x_{i-j}. 
\end{align}
Applying the Taylor series expansion of the function $f'(x_j,x_{\alpha_{i-1,j}-1}+(x_{\alpha_{i,j}-1}- x_{\alpha_{i-1,j}-1}))$ about the point $x_{\alpha_{i-1,j}-1}$ in the first term on the rhs of the equation (\ref{finalcalculation}) to get
\begin{align*}
\sigma_i^{\text{agg}}(t) = \frac{1}{x_i \Delta x_i}& \bigg[\sum_{j=1}^{i-1} x_j n_j \Delta x_j f'(x_j,x_{\alpha_{i-1,j}-1}) \bigg(\int_{x_{i+ 1/2}-x_j}^{x_{\alpha_{i,j}-1/2}} (v-x_{\alpha_{i,j}-1}) dv\\& - \int_{x_{i- 1/2}-x_j}^{x_{\alpha_{i-1,j}-1/2}} (v-x_{\alpha_{i-1,j}-1}) dv\bigg) \bigg] + {\cal O}(\Delta x^2).
\end{align*}
Further by facilitating the integrals and using the relation (\ref{griduniformreln}), we have
\begin{align*}
\sigma_i^{\text{agg}}(t) = \frac{1}{x_i \Delta x_i} \bigg[\sum_{j=1}^{i-1} x_j n_j \Delta x_j f'(x_j,x_{\alpha_{i-1,j}-1}) \bigg(\frac{\Delta x_{\alpha_{i,j}-1}^2}{8} - \frac{\Delta x_{\alpha_{i-1,j}-1}^2}{8}\bigg) \bigg] + {\cal O}(\Delta x^2).
\end{align*}
Hence, $\sigma_i^{\text{agg}}(t) =  {\cal O}(\Delta x^2)$ and so the order of consistency is given by using (\ref{norm}) as
\begin{align*}
\| {\boldsymbol{\sigma}}(t) \| =  \sum_{i=1}^{I} |\sigma_i^{\text{agg}}(t)|\Delta x_i =  {\cal O}(\Delta x^2) \pp
\end{align*}
Therefore, the scheme is second order consistent on uniform grids.
\subsubsection{Non-uniform smooth mesh}
A smooth transformation from uniform grids leads to such meshes. In this case grids are assumed to be smooth in the sense that $\Delta x_i - \Delta x_{i-1} = {\cal O}(\Delta x^2)$ and $2 \Delta x_i - (\Delta x_{i-1}+\Delta x_{i+1}) = {\cal O}(\Delta x^3)$, where $\Delta x$ is the maximum mesh width. For example, let us consider a variable $\xi$ with uniform mesh and a smooth transformation $x=g(\xi)$ to get non-uniform smooth mesh, see Figure \ref{p:SmoothMesh}. For the analysis here, we have considered the exponential transformation as $x= \exp(\xi).$ Such a mesh is also known as a geometric mesh, i.e.\ $x_{i+1/2}= r x_{i-1/2}$ with $r= \exp(\bar{h})$. The term $\bar{h}$ is the width of the uniform grid. Here again we achieve second order consistency. \enter
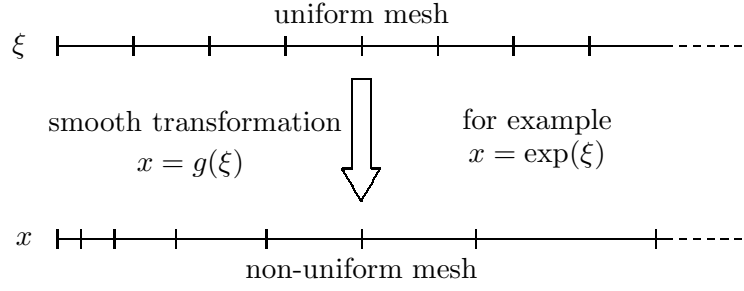
\begin{figure}
\centering
\unitlength 1mm
\begin{picture}(100.00,36.25)(0,0)

\linethickness{0.15mm}
\put(10.00,31.88){\line(1,0){80.00}}

\linethickness{0.15mm}
\multiput(90.00,31.88)(1.82,0){6}{\line(1,0){0.91}}

\linethickness{0.15mm}
\put(10.00,30.63){\line(0,1){2.50}}

\linethickness{0.15mm}
\put(20.00,30.63){\line(0,1){2.50}}

\linethickness{0.15mm}
\put(30.00,30.63){\line(0,1){2.50}}

\linethickness{0.15mm}
\put(40.00,30.63){\line(0,1){2.50}}

\linethickness{0.15mm}
\put(50.00,30.63){\line(0,1){2.50}}

\linethickness{0.15mm}
\put(60.00,30.63){\line(0,1){2.50}}

\linethickness{0.15mm}
\put(70.00,30.63){\line(0,1){2.50}}

\linethickness{0.15mm}
\put(80.00,30.63){\line(0,1){2.50}}

\linethickness{0.15mm}
\put(48.75,15.63){\line(0,1){11.88}}
\put(47.50,15.63){\line(1,0){1.25}}
\multiput(47.50,15.63)(0.12,-0.21){21}{\line(0,-1){0.21}}
\multiput(50.00,11.25)(0.12,0.21){21}{\line(0,1){0.21}}
\put(51.25,15.63){\line(1,0){1.25}}
\put(51.25,15.63){\line(0,1){11.88}}
\put(48.75,27.50){\line(1,0){2.50}}

\put(5.00,31.88){\makebox(0,0)[cc]{$\xi$}}

\put(80.00,11.88){\makebox(0,0)[cc]{}}

\linethickness{0.15mm}
\put(10.00,6.25){\line(1,0){80.00}}

\linethickness{0.15mm}
\multiput(90.00,6.25)(1.82,0){6}{\line(1,0){0.91}}

\linethickness{0.15mm}
\put(10.00,5.00){\line(0,1){2.50}}

\linethickness{0.15mm}
\put(13.13,5.00){\line(0,1){2.50}}

\linethickness{0.15mm}
\put(17.50,5.00){\line(0,1){2.50}}

\linethickness{0.15mm}
\put(25.63,5.00){\line(0,1){2.50}}

\linethickness{0.15mm}
\put(37.50,5.00){\line(0,1){2.50}}

\linethickness{0.15mm}
\put(50.00,5.00){\line(0,1){2.50}}

\linethickness{0.15mm}
\put(65.00,5.00){\line(0,1){2.50}}

\linethickness{0.15mm}
\put(88.75,5.00){\line(0,1){2.50}}

\put(5.63,6.25){\makebox(0,0)[cc]{$x$}}

\put(28.13,21.88){\makebox(0,0)[cc]{smooth transformation}}

\put(27.50,16.25){\makebox(0,0)[cc]{$x = g(\xi)$}}

\put(5.63,31.25){\makebox(0,0)[cc]{}}

\put(50.00,36.25){\makebox(0,0)[cc]{uniform mesh}}

\put(50.00,2.50){\makebox(0,0)[cc]{non-uniform mesh}}

\put(73.13,21.88){\makebox(0,0)[cc]{for example}}

\put(73.13,17.50){\makebox(0,0)[cc]{$x = \exp(\xi)$}}

\end{picture}
\caption{Non-uniform smooth mesh.}\label{p:SmoothMesh}
\end{figure}

Equation (\ref{finalcalculation}) can be rewritten by setting $j=j-1$ in second term as 
\begin{align*}
\sigma_i^{\text{agg}}(t) =& \frac{1}{x_i \Delta x_i} \bigg[\underbrace{\sum_{j=1}^{i} x_j n_j \Delta x_j \int_{x_{i+ 1/2}-x_j}^{x_{\alpha_{i,j}-1/2}} (v-x_{\alpha_{i,j}-1}) f'(x_j,x_{\alpha_{i,j}-1}) dv}_{A}\\ -&\underbrace{\sum_{j=2}^{i} x_{j-1} n_{j-1} \Delta x_{j-1} \int_{x_{i- 1/2}-x_{j-1}}^{x_{\alpha_{i-1,j-1}-1/2}} (v-x_{\alpha_{i-1,j-1}-1})f'(x_{j-1},x_{\alpha_{i-1,j-1}-1}) dv}_{B} \bigg] + {\cal O}(\Delta x^2).
\end{align*}
Now we simplify $A-B$ as
\begin{align*}
A-B = & \sum_{j=2}^{i} x_{j-1} n_{j-1} \Delta x_j \int_{x_{i+1/2}-x_j}^{x_{\alpha_{i,j}-1/2}} (v-x_{\alpha_{i,j}-1})f'(x_{j-1},x_{\alpha_{i-1,j-1}-1}) dv\\
&-\sum_{j=2}^{i} x_{j-1} n_{j-1} \Delta x_{j-1} \int_{x_{i-1/2}-x_{j-1}}^{x_{\alpha_{i-1,j-1}-1/2}} (v-x_{\alpha_{i-1,j-1}-1}) f'(x_{j-1},x_{\alpha_{i-1,j-1}-1}) dv + {\cal O}(\Delta x^3).
\end{align*}
Further it can be rewritten as
\begin{align*}
A-B = & \sum_{j=2}^{i} x_{j-1} n_{j-1} (\Delta x_j-\Delta x_{j-1}) \int_{x_{i+1/2}-x_j}^{x_{\alpha_{i,j}-1/2}} (v-x_{\alpha_{i,j}-1})f'(x_{j-1},x_{\alpha_{i-1,j-1}-1}) dv\\
&+\sum_{j=2}^{i} x_{j-1} n_{j-1} \Delta x_{j-1} \int_{x_{i+1/2}-x_j}^{x_{\alpha_{i,j}-1/2}} (v-x_{\alpha_{i,j}-1})f'(x_{j-1},x_{\alpha_{i-1,j-1}-1}) dv\\
&-\sum_{j=2}^{i} x_{j-1} n_{j-1} \Delta x_{j-1} \int_{x_{i-1/2}-x_{j-1}}^{x_{\alpha_{i-1,j-1}-1/2}} (v-x_{\alpha_{i-1,j-1}-1}) f'(x_{j-1},x_{\alpha_{i-1,j-1}-1}) dv + {\cal O}(\Delta x^3).
\end{align*}
For such smooth meshes, $\Delta x_j-\Delta x_{j-1} = {\cal O}(\Delta x^2)$ holds. Setting 
${\alpha_{i,j}-1} = \alpha_1$, ${\alpha_{i-1,j-1}-1} = \alpha_2$ and $g_{i,j} =x_{j-1} n_{j-1} 
\Delta x_{j-1} f'(x_{j-1},x_{\alpha_{i-1,j-1}-1})$ yield
\begin{align*}
A-B =\sum_{j=2}^{i} g_{i,j} \left(\int_{x_{i+1/2}-x_j}^{x_{\alpha_{1}+1/2}} (v-x_{\alpha_{1}})dv-\int_{x_{i-1/2}-x_{j-1}}^{x_{\alpha_{2}+1/2}} (v-x_{\alpha_{2}})dv\right) + {\cal O}(\Delta x^3).
\end{align*}
It can further be simplified as
\begin{align*}
A-B =\sum_{j=2}^{i}\frac{g_{i,j}}{2} \left( \frac{\Delta x_{\alpha_1}^2}{4} - \frac{\Delta x_{\alpha_2}^2}{4}
+ \left[({x_{i-1/2}-x_{j-1}})-x_{\alpha_{2}}\right]^2- \left[({x_{i+1/2}-x_j})-x_{\alpha_{1}}\right]^2\right)+ {\cal O}(\Delta x^3).
\end{align*}
Since $x_{i+1/2}-x_j \in \Lambda_{\alpha_{i,j}-1}$, thus $x_{i-1/2}-x_{j-1} \in \Lambda_{\alpha_{i-1,j-1}-1}$. Further notice that
$x_{i+1/2}-x_j = r (x_{i-1/2}-x_{j-1})$ and therefore $\alpha_1 = \alpha_2+1$. Again by using the 
condition $\Delta x_j-\Delta x_{j-1} = {\cal O}(\Delta x^2)$, we determine 
$\Delta x_{\alpha_1}^2 - \Delta x_{\alpha_2}^2 = {\cal O}(\Delta x^3)$. Now, to get a second order 
consistency of the scheme, it is remained to show that $$ \left[({x_{i-1/2}-x_{j-1}})-x_{\alpha_{2}}\right]^2- \left[({x_{i+1/2}-x_j})-x_{\alpha_{1}}\right]^2 = {\cal O}(\Delta x^3)$$ or equivalently,
\begin{align}\label{thurs2907}
\left[({x_{i-1/2}-x_{j-1}})-x_{\alpha_{2}}\right]- \left[({x_{i+1/2}-x_j})-x_{\alpha_{1}}\right] = {\cal O}(\Delta x^2).
\end{align}
Let us consider $\xi_1$, $\xi_2$ are corresponding points in the uniform mesh for $x_{\alpha_2}$ and $x_{i-1/2}-x_{j-1}$, respectively.
Consider $h_1= \xi_2-\xi_1$ which is given as $$h_1= \xi_2-\xi_1= \log\left(x_{i-1/2}-x_{j-1}\right)- \log\left(x_{\alpha_2}\right)= \log\left(\frac{x_{i-1/2}-x_{j-1}}{x_{\alpha_2}}\right).$$
Similarly, taking $h_2= \xi_4-\xi_3$ where $\xi_3$ and $\xi_4$ are the points in the uniform mesh 
corresponding to the points $x_{\alpha_1}$ and $x_{i+1/2}-x_j$, respectively, 
we evaluate 
$$h_2= \xi_4-\xi_3= \log\left(x_{i+1/2}-x_{j}\right)- \log\left(x_{\alpha_1}\right)= \log\left(\frac{x_{i+1/2}-x_j}{x_{\alpha_1}}\right)= \log\left(\frac{x_{i-1/2}-x_{j-1}}{x_{\alpha_2}}\right)= h_1.$$

Setting $h= h_1= h_2.$ Further 
$$\xi_3-\xi_1= \log\left(x_{\alpha_{1}}\right)-\log\left(x_{\alpha_{2}}\right)= \log\left(\frac{x_{\alpha_{1}}}{x_{\alpha_{2}}}\right)= \log\left(r\right)= \bar{h}.$$
Finally, the equation (\ref{thurs2907}) can be estimated by using Taylor series expansion as 
\begin{align*}
\left[({x_{i-1/2}-x_{j-1}})-x_{\alpha_{2}}\right]- \left[({x_{i+1/2}-x_j})-x_{\alpha_{1}}\right]=& \left[g(\xi_2)-g(\xi_1)\right]- \left[g(\xi_4)-g(\xi_3)\right]\\
=& h g^{'}(\xi_1) - h g^{'}(\xi_3) + {\cal O}(h^2)\\
=& h(g^{'}(\xi_1)- g^{'}(\xi_1+\bar{h})) + {\cal O}(h^2)\\ =& -h\bar{h}g^{''}(\xi_1)+{\cal O}(h^2)= {\cal O}(h^2). 
\end{align*}
Hence, by using (\ref{consisthurs}) and (\ref{norm}) the order of consistency for the pure aggregation process is two for the smooth meshes $x_{i+1/2}=r x_{i-1/2}$.
\subsubsection{Oscillatory and random meshes}
A mesh is known to be an oscillatory mesh, if for $r > 0 (r\neq 1)$ it is given as
\begin{align}\label{oscimesh}
\Delta x_{i+1}:= \begin{cases} {r \Delta x_{i}} \ \ \ \ \text{if}\ \ \ i \ \ \text{is odd},\\
                 {\frac{1}{r} \Delta x_{i}} \ \ \ \ \text{if}\ \ \ i \ \ \text{is even}.
                \end{cases}
\end{align}
From the equation (\ref{finalcalculation}), it is clear that the first two terms on the rhs can not be cancel out for an oscillatory or a random mesh. Therefore, $\sigma_i^{\text{agg}}(t) = {\cal O}(\Delta x)$ and so the accuracy of the semi discrete scheme (\ref{eq.FVS_SemiDiscrete1}) is one by using the relation (\ref{norm}) on such meshes.\enter

Now for the coupled aggregation and breakage problems, the local truncation error of each process can be combined and give second order consistency on uniform and non-uniform smooth meshes whereas it is of first order on the other two types of grids.
\end{proof}

\subsection{Lipschitz continuity of the fluxes}
To prove the Lipschitz continuity of the numerical flux ${\bf{J(\hat{n})}}$ in 
(\ref{eq.FVS_SemiDiscrete1}), the following three lemmas are used.
\begin{lem}\label{aaj1} Let us assume that the points $x_{i+j-\frac{1}{2}}- x_k$ for given $i, k$ and $j= 1,2,\ldots, p$ where $p\geq 2$ lie in the same cell $\Lambda_\alpha$ for some index $\alpha$. We also assume that our grid satisfies the quasi-uniformity condition
\begin{align}\label{quasiuniformgrid}
\frac{\Delta x_{\text{max}}} {\Delta x_{\text{min}}} \leq C
\end{align}
for some constant $C$(independent of the mesh size). Then $p$ is bounded by $C+1$.
\end{lem}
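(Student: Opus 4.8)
The plan is to exploit the fact that the $p$ points in question are simply cell interfaces translated by the fixed amount $-x_k$, so that their consecutive gaps are exactly cell widths, bounded below by $\Delta x_{\text{min}}$; meanwhile all of them are confined to a single cell, whose width is bounded above by $\Delta x_{\text{max}}$. Comparing these two bounds on the total span of the points immediately caps how many can fit, and the quasi-uniformity condition (\ref{quasiuniformgrid}) converts this into the stated bound on $p$.

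First I would note that for fixed $i$ and $k$ the map $j \mapsto x_{i+j-1/2} - x_k$ is strictly increasing in $j$, since the interfaces $x_{i+j-1/2}$ increase with $j$. Hence the $p$ points are ordered, the smallest being the one for $j=1$ and the largest the one for $j=p$. The gap between the points for $j$ and $j+1$ is
\[
(x_{i+j+1/2}-x_k)-(x_{i+j-1/2}-x_k)=\Delta x_{i+j},
\]
so the total span of the family equals
\[
(x_{i+p-1/2}-x_k)-(x_{i+1/2}-x_k)=\sum_{m=i+1}^{i+p-1}\Delta x_m ,
\]
a sum of $p-1$ cell widths. Since each width satisfies $\Delta x_m \geq \Delta x_{\text{min}}$, this span is at least $(p-1)\,\Delta x_{\text{min}}$.

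Next I would bound the span from above. By hypothesis all $p$ points lie in the single cell $\Lambda_\alpha = \,]x_{\alpha-1/2},x_{\alpha+1/2}]$, whose length is $\Delta x_\alpha$; hence the distance between the extreme points for $j=1$ and $j=p$ cannot exceed $\Delta x_\alpha \leq \Delta x_{\text{max}}$. Combining the two estimates gives $(p-1)\,\Delta x_{\text{min}} \leq \Delta x_{\text{max}}$, and dividing by $\Delta x_{\text{min}}$ together with (\ref{quasiuniformgrid}) yields $p-1 \leq \Delta x_{\text{max}}/\Delta x_{\text{min}} \leq C$, i.e. $p \leq C+1$.

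I do not expect any serious obstacle: the argument is a one-line span comparison once the translation by $-x_k$ is recognized as irrelevant to the spacing. The only points requiring mild care are the index counting — that $p$ ordered interfaces leave exactly $p-1$ intervening cells — and the half-open nature of the containing cell, which at worst replaces a nonstrict inequality by a strict one and does not affect the final bound.
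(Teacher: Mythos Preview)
Your argument is correct and follows essentially the same route as the paper: both observe that the span of the $p$ translated interfaces equals the sum $\Delta x_{i+1}+\cdots+\Delta x_{i+p-1}$, which must be dominated by the width $\Delta x_\alpha$ of the containing cell, and then invoke quasi-uniformity to conclude $p-1\le C$. The only cosmetic difference is that the paper compares $\Delta x_\alpha$ directly to each $\Delta x_l$ via $\Delta x_l\ge \Delta x_\alpha/C$, whereas you route the inequality through $\Delta x_{\text{min}}$ and $\Delta x_{\text{max}}$; the two are equivalent.
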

\begin{proof}
Our assumption on the points implies that by (\ref{indexagg}), we have
\begin{align*}
{\alpha_{i,k}-1} = {\alpha_{i+1,k}-1} = \ldots = {\alpha_{i+p-1,k}-1} = \alpha.
\end{align*}
Clearly, $\Delta x_\alpha \geq \Delta x_{i+1} + \Delta x_{i+2}+\ldots +\Delta x_{i+p-1}.$
This implies that $$\frac{\Delta x_{\alpha}} {\Delta x_l} \leq \frac{\Delta x_{\text{max}}} {\Delta x_{\text{min}}} \leq C \quad \Rightarrow \quad \frac{\Delta x_{\alpha}} {C} \leq \Delta x_l \quad \text{for}\quad l= i+1,\cdots,i+p-1.$$
Therefore, $\Delta x_{\alpha} \geq (p-1) \frac{\Delta x_{\alpha}}{C}$, giving $p \leq (C+1).$
\end{proof}

In the next two lemmas the boundedness of the total number of particles for the aggregation and 
multiple breakage equations are discussed.

\begin{lem}\label{boundednessofnumbercontinuous} Let us assume that the kernels $\beta$, $S$ and $b$ 
satisfy the boundedness condition (\ref{kernelbound}). Then the total number of particles for the continuous aggregation-breakage equation (\ref{E:FormCLtrun}) is
bounded by a constant $C_{T,x_{\text{max}}}> 0$ depending on $T$ and $x_{\text{max}}$, namely
\begin{align*}
\int_0^{x_{\text{max}}} n(t,x)dx= N(t) =\sum_{i=1}^I N_i(t) \leq {N}(0) \exp(x_{\text{max}} {Q}_1 T) = C_{T,x_{\text{max}}}.
\end{align*}
\end{lem}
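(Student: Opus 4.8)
The plan is to derive a first-order differential inequality for $N(t) = \int_0^{x_{\text{max}}} n(t,x)\,dx$ of the form $N'(t) \leq x_{\text{max}} Q_1\, N(t)$ and then close the estimate with Gronwall's lemma. The splitting $N(t) = \sum_{i=1}^I N_i(t)$ with $N_i(t) := \int_{\Lambda_i} n(t,x)\,dx$ is immediate from additivity of the integral over the partition $]0,x_{\text{max}}] = \bigcup_{i} \Lambda_i$, so the substance is the exponential bound. First I would differentiate $N(t)$ under the integral sign and substitute the truncated non-conservative form of the PBE equivalent to (\ref{E:FormCLtrun}) (the analogue of (\ref{pbe1}) with all integration domains restricted to $]0,x_{\text{max}}]$) for $\partial_t n$, thereby splitting $N'(t)$ into an aggregation contribution and a breakage contribution. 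Note that one must work with the \emph{non-conservative} number-density equation here, since integrating the divergence form (\ref{E:FormCLtrun}) directly would produce the mass balance rather than the number balance.

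For the aggregation part I would integrate the birth term $\tfrac12\int_0^x \beta(x-u,u)n(x-u)n(u)\,du$ over $x\in\,]0,x_{\text{max}}]$ and apply Fubini together with the substitution $w=x-u$, turning it into $\tfrac12\iint_{\{w,u\ge 0,\ w+u\le x_{\text{max}}\}}\beta(w,u)n(w)n(u)\,dw\,du$, while the death term integrates to $\iint_{]0,x_{\text{max}}]^2}\beta(x,u)n(x)n(u)\,du\,dx$. Since the integrand is non-negative and the birth region is a subset of the square $]0,x_{\text{max}}]^2$, the birth integral is bounded by half the death integral, so the entire aggregation contribution to $N'(t)$ is non-positive and can be discarded.

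For the breakage part I would integrate $\int_x^{x_{\text{max}}} b(x,\epsilon)S(\epsilon)n(\epsilon)\,d\epsilon$ over $x$, drop the non-positive death term $-\int_0^{x_{\text{max}}} S(x)n(x)\,dx$, and apply Fubini to obtain $\iint_{\{0<x<\epsilon\le x_{\text{max}}\}}b(x,\epsilon)S(\epsilon)n(\epsilon)\,dx\,d\epsilon$. The key step is to invoke the boundedness hypothesis (\ref{kernelbound}), $b(x,\epsilon)S(\epsilon)\le Q_1$, \emph{before} performing the inner integration; this deliberately avoids using $\int_0^\epsilon b(x,\epsilon)\,dx=\bar N(\epsilon)$ from (\ref{breakageproperties}), which is essential because $\bar N$ may be infinite. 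The remaining inner integral $\int_0^\epsilon dx=\epsilon\le x_{\text{max}}$ then gives $Q_1\int_0^{x_{\text{max}}}\epsilon\,n(\epsilon)\,d\epsilon\le x_{\text{max}}Q_1\,N(t)$.

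Combining the two contributions yields $N'(t)\le x_{\text{max}}Q_1\,N(t)$ for $0\le t\le T$, and Gronwall's inequality gives $N(t)\le N(0)\exp(x_{\text{max}}Q_1 t)\le N(0)\exp(x_{\text{max}}Q_1 T)=C_{T,x_{\text{max}}}$, as claimed. I expect the routine points to be the justification of differentiation under the integral sign and of the Fubini interchanges, both of which are valid under the stated $\mathcal{C}^2$ regularity and boundedness of the kernels. The one genuinely non-routine point — and the step I would flag as the crux — is the recognition that the bound $b S\le Q_1$ must be used before integrating $b$, precisely so that the estimate never passes through the possibly infinite quantity $\bar N$.
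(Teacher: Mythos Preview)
Your proposal is correct and follows essentially the same argument as the paper: show the aggregation contribution to $N'(t)$ is non-positive via Fubini and the substitution $w=x-u$, bound the breakage birth term by $x_{\text{max}}Q_1 N(t)$ using $bS\le Q_1$ pointwise before integrating in $x$, drop the non-positive breakage death term, and close with Gronwall. The only cosmetic difference is that the paper starts from the conservative form~(\ref{E:FormCLtrun}), divides by $x$, and applies the Leibniz rule to $\partial_x F^{\text{agg}}$ and $\partial_x F^{\text{brk}}$ --- thereby recovering exactly the truncated non-conservative terms you write down directly.
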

\begin{proof}
The proof can be found in Appendix \ref{appendixFV_number}.
\end{proof}

\begin{lem}\label{boundednessofnumber} Under the same assumptions on $\beta$, $S$ and $b$ considered in the previous lemma, we have boundedness of the total number of particles for the discrete aggregation-breakage equation (\ref{E:GenDis}) by using the finite volume scheme. The bound in this case is again $C_{T,x_{\text{max}}}$ as before, i.e.\ 
\begin{align}\label{numboundonnumber}
\sum_{i=1}^I \hat{n}_i \Delta x_i = \hat{N}(t)= \sum_{i=1}^I \hat{N}_i(t) \leq \hat{N}(0) \exp(x_{\text{max}} {Q}_1 T) = C_{T,x_{\text{max}}}
\end{align}
provided that the initial data $\hat{N}(0)$ and ${N}(0)$ are the same.
\end{lem}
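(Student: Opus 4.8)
The plan is to derive a differential inequality for the discrete particle number $\hat N(t)=\sum_{i=1}^I \hat n_i(t)\Delta x_i$ of the form $\frac{d\hat N}{dt}\le Q_1 x_{\text{max}}\,\hat N(t)$ and then close the estimate by Gronwall's lemma, mirroring exactly the continuous argument behind Lemma~\ref{boundednessofnumbercontinuous}. Throughout I would use that the numerical solution stays non-negative, $\hat n_i(t)\ge 0$, which is supplied by the positivity theorem quoted above together with the Lipschitz continuity of the fluxes established in the next subsection. Differentiating $\hat N$ and inserting the semi-discrete scheme (\ref{eq.FVS_SemiDiscrete1}) componentwise gives $\frac{d\hat N}{dt}=-\sum_{i=1}^I \Delta x_i\,\Delta J_i^{\text{agg}}({\bf{\hat n}})-\sum_{i=1}^I \Delta x_i\,\Delta J_i^{\text{brk}}({\bf{\hat n}})$, so it suffices to treat the two sums separately: I will show the aggregation sum is non-positive and the breakage sum is bounded by $Q_1 x_{\text{max}}\hat N(t)$.

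For the breakage contribution I would insert (\ref{fluxcombbrk}). The second term there, $S(x_i)\hat n_i\Delta x_i\sum_{j=1}^{i-1}\frac{x_j}{x_i}b(x_j,x_i)\Delta x_j$, is a death term and is manifestly non-negative, so after the change of sign it only helps the upper bound and may be discarded. Swapping the order of summation in the remaining birth term converts $\sum_{i}\sum_{k=i+1}^I$ into $\sum_{k}\sum_{i=1}^{k-1}$; the bound $b(x_i,x_k)S(x_k)\le Q_1$ from (\ref{kernelbound}) together with $\sum_{i=1}^{k-1}\Delta x_i\le x_{\text{max}}$ then yields $-\sum_i \Delta x_i\,\Delta J_i^{\text{brk}}\le Q_1 x_{\text{max}}\sum_k \hat n_k\Delta x_k=Q_1 x_{\text{max}}\hat N(t)$, the desired growth term.

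The main obstacle is the aggregation sum, where the weights $1/x_i$ hidden in $\Delta J_i^{\text{agg}}$ (recall $\Delta x_i\,\Delta J_i^{\text{agg}}=\frac{1}{x_i}(J^{\text{agg}}_{i+1/2}-J^{\text{agg}}_{i-1/2})$ by (\ref{naya1})) prevent the flux differences from telescoping as they would for total mass. I would resolve this by summation by parts. Writing $g_i:=J^{\text{agg}}_{i+1/2}({\bf{\hat n}})$ with $g_0=J^{\text{agg}}_{1/2}=0$, one obtains
\[
-\sum_{i=1}^I \Delta x_i\,\Delta J_i^{\text{agg}}=\sum_{i=1}^I \frac{1}{x_i}\,(g_{i-1}-g_i)=\sum_{i=1}^{I-1}\left(\frac{1}{x_{i+1}}-\frac{1}{x_i}\right)g_i-\frac{g_I}{x_I}.
\]
Here I would use two sign facts. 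First, each $g_i\ge 0$: by (\ref{aggnumflux}) the numerical aggregation flux is a sum of products of the non-negative quantities $\hat n_k$, $\beta_{j,k}$ and the geometric weight $x_{\alpha_{i,k}-1/2}-(x_{i+1/2}-x_k)$, the latter being non-negative precisely because $x_{i+1/2}-x_k\in\Lambda_{\alpha_{i,k}-1}$ by (\ref{indexagg}). Second, $x_{i+1}>x_i>0$ makes every difference $\frac{1}{x_{i+1}}-\frac{1}{x_i}$ negative. Hence every summand is $\le 0$, so the aggregation contribution is non-positive, which is the discrete counterpart of the fact that binary aggregation can never increase the total particle number.

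Combining the two estimates gives $\frac{d\hat N}{dt}\le Q_1 x_{\text{max}}\hat N(t)$ for $0\le t\le T$, and integrating this differential inequality (Gronwall) yields $\hat N(t)\le \hat N(0)\exp(Q_1 x_{\text{max}} t)\le \hat N(0)\exp(Q_1 x_{\text{max}} T)$; since $\hat N(0)=N(0)$ by hypothesis, the right-hand side equals $C_{T,x_{\text{max}}}$, which establishes (\ref{numboundonnumber}). I expect the summation-by-parts step for the aggregation term to be the only genuinely non-obvious point, the breakage estimate and the final Gronwall step being routine, while the non-negativity of $\hat n_i$ must be invoked throughout but is guaranteed by the positivity theorem.
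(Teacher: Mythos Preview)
Your proposal is correct and follows essentially the same route as the paper's proof: the summation-by-parts rearrangement of the aggregation flux is exactly the identity the paper writes out, and the sign conclusion (each $g_i\ge 0$, each weight $\frac{1}{x_{i+1}}-\frac{1}{x_i}<0$, $g_I/x_I\ge 0$) is identical. The only cosmetic difference is in the breakage estimate: the paper first combines the birth and death sums into $\sum_k \hat N_k S(x_k)\sum_{i<k} b(x_i,x_k)\Delta x_i(1-x_i/x_k)$ and then bounds $1-x_i/x_k\le 1$, whereas you simply drop the non-negative death term at the outset; both lead to the same inequality $\frac{d\hat N}{dt}\le Q_1 x_{\text{max}}\hat N$ and the Gronwall conclusion is the same.
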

\begin{proof}
The proof has been given in Appendix \ref{appendixFV_number}.
\end{proof}

Now, the Lipschitz continuity of the numerical flux ${\bf{J(\hat{n})}}$ defined as in 
(\ref{eq.FVS_SemiDiscrete1}) is shown.
\begin{lem}\label{consislemma25081}
Let us assume that our grid satisfies the quasi-uniformity condition (\ref{quasiuniformgrid}). We also assume that the kernels 
$\beta$, $S$ and $b$ satisfy the bounds (\ref{kernelbound}) which are $\beta\leq Q$ and $bS\leq Q_1$. Then there exists a Lipschitz constant $L:= (4C+6)Q C_{T,x_{\text{max}}} + 2 Q_1 x_{\text{max}}< \infty$ for some constants $C, C_{T,x_{\text{max}}} > 0$ such that
\begin{align}\label{lipcondnew}
\|{\bf{J(n)}}- {\bf{J(\hat{n})}}\|\leq L\, \|{\bf{n}}-{\bf{\hat{n}}}\|,
\end{align}
holds.
\end{lem}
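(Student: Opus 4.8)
The plan is to split $\mathbf{J} = -\bigl(\Delta\mathbf{J}^{\text{agg}} + \Delta\mathbf{J}^{\text{brk}}\bigr)$ and establish the two Lipschitz estimates $\|\Delta\mathbf{J}^{\text{agg}}(\mathbf{n}) - \Delta\mathbf{J}^{\text{agg}}(\hat{\mathbf{n}})\| \le L_1\|\mathbf{n}-\hat{\mathbf{n}}\|$ and $\|\Delta\mathbf{J}^{\text{brk}}(\mathbf{n}) - \Delta\mathbf{J}^{\text{brk}}(\hat{\mathbf{n}})\| \le L_2\|\mathbf{n}-\hat{\mathbf{n}}\|$ independently, with $L_2 = 2Q_1 x_{\text{max}}$ and $L_1 = (4C+6)QC_{T,x_{\text{max}}}$, so that (\ref{lipcondnew}) follows by the triangle inequality with $L = L_1 + L_2$. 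Throughout I would use that $\mathbf{n}$ and $\hat{\mathbf{n}}$ are the nonnegative projected exact and numerical solutions, so that $\|\mathbf{n}\| = \sum_i n_i\Delta x_i \le C_{T,x_{\text{max}}}$ and $\|\hat{\mathbf{n}}\| \le C_{T,x_{\text{max}}}$ by Lemmas \ref{boundednessofnumbercontinuous} and \ref{boundednessofnumber}; these a priori total-number bounds are exactly what allow one factor in each bilinear aggregation term to be absorbed into a constant.

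The breakage estimate is the easy half. Since (\ref{fluxcombbrk}) is linear in $\mathbf{n}$, the difference $\Delta J_i^{\text{brk}}(\mathbf{n}) - \Delta J_i^{\text{brk}}(\hat{\mathbf{n}})$ is simply (\ref{fluxcombbrk}) with $\mathbf{n}$ replaced by $\mathbf{n}-\hat{\mathbf{n}}$. Inserting this into the norm (\ref{norm}), applying the triangle inequality, and using $b(x_i,x_k)S(x_k)\le Q_1$, $b(x_j,x_i)S(x_i)\le Q_1$ together with $x_j/x_i\le 1$ for $j<i$, the death term gives, after interchanging the $i$ and $k$ sums and using $\sum_{i<k}\Delta x_i\le x_{\text{max}}$, the bound $Q_1 x_{\text{max}}\|\mathbf{n}-\hat{\mathbf{n}}\|$; the birth term gives the same after bounding $\tfrac{1}{x_i}\sum_{j<i}x_j\Delta x_j = \sum_{j<i}\tfrac{x_j}{x_i}\Delta x_j\le x_{\text{max}}$. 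This produces $L_2 = 2Q_1 x_{\text{max}}$.

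For aggregation I would work from the telescoped form (\ref{fluxcombagg}) of $\Delta x_i\Delta J_i^{\text{agg}}$, which separates a diagonal contribution (the $k=i$ block, carrying the explicit weight $\Delta x_i$ on the index $i$) from a correction contribution (the sum over $k<i$ built from the short inner sums $\sum_{j=\alpha_{i-1,k}}^{\alpha_{i,k}-1}$ and two cell-fragment boundary terms). Each block is bilinear, so I would apply $B(\mathbf{n},\mathbf{n}) - B(\hat{\mathbf{n}},\hat{\mathbf{n}}) = B(\mathbf{n}-\hat{\mathbf{n}},\mathbf{n}) + B(\hat{\mathbf{n}},\mathbf{n}-\hat{\mathbf{n}})$ termwise, bound $\beta_{j,k}\le Q$ and every cell-fragment width by $\Delta x$, and replace full inner sums $\sum_j n_j\Delta x_j$ by $C_{T,x_{\text{max}}}$. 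For the diagonal block the weight $\Delta x_i$ already sits on the difference index, so summing over $i$ against either $\|\mathbf{n}-\hat{\mathbf{n}}\|$ or $\sum_i\hat n_i\Delta x_i\le C_{T,x_{\text{max}}}$ closes immediately and produces the $O(QC_{T,x_{\text{max}}})$ part of $L_1$.

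The main obstacle is the correction block, and the subtlety is that one must \emph{not} bound $|J^{\text{agg}}_{i+1/2}-J^{\text{agg}}_{i-1/2}|$ by $|J^{\text{agg}}_{i+1/2}|+|J^{\text{agg}}_{i-1/2}|$: this discards the cancellation and leaves a double sum $\sum_i\sum_{k<i}$ whose coefficient grows with the number of cells. Instead I would keep the difference structure of (\ref{fluxcombagg}) and, for each fixed $k$, carry out the $i$-summation first. Here Lemma \ref{aaj1} is decisive: a double-counting argument based on quasi-uniformity shows that each cell index $j$ lies in at most $C+1$ of the ranges $[\alpha_{i-1,k},\alpha_{i,k}-1]$ as $i$ varies, so that $\sum_{i>k}\sum_{j=\alpha_{i-1,k}}^{\alpha_{i,k}-1} n_j\Delta x_j \le (C+1)\sum_j n_j\Delta x_j \le (C+1)C_{T,x_{\text{max}}}$, while the weight $x_k/x_i\le 1$ removes the size ratio and the two boundary sub-terms are controlled the same way through their widths $\le\Delta x$ and the same multiplicity count. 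Summing the surviving $|n_k-\hat n_k|\Delta x_k$ against $\|\mathbf{n}-\hat{\mathbf{n}}\|$ (respectively $\hat n_k\Delta x_k$ against $C_{T,x_{\text{max}}}$) yields the $O(CQC_{T,x_{\text{max}}})$ contribution, i.e. the $4C$ term, and collecting the two blocks gives $L_1 = (4C+6)QC_{T,x_{\text{max}}}$. The only genuinely delicate bookkeeping is the multiplicity argument of Lemma \ref{aaj1}, which is precisely where quasi-uniformity enters, and keeping track throughout of which index carries the $L^1$ weight $\Delta x$ so that one $\mathbf{n}$-factor is absorbed by the total-number bound and the other produces $\|\mathbf{n}-\hat{\mathbf{n}}\|$.
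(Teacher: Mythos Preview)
Your plan is correct and follows essentially the same route as the paper: split $\mathbf{J}$ into aggregation and breakage, handle breakage directly from its linearity in (\ref{fluxcombbrk}) to produce $L_2=2Q_1x_{\max}$, and for aggregation work from the telescoped expression (\ref{fluxcombagg}), use a bilinear difference identity, absorb one factor via the total-number bounds of Lemmas \ref{boundednessofnumbercontinuous}--\ref{boundednessofnumber}, and invoke Lemma \ref{aaj1} to control the repeated indices after swapping the $i$- and $k$-sums. The paper organises the aggregation part into four pieces $S_1,\dots,S_4$ (your ``correction'' block is $S_1+S_2+S_3$, your ``diagonal'' block is $S_4$) and uses the symmetric identity $xy-\hat x\hat y=\tfrac12[(x-\hat x)(y+\hat y)+(x+\hat x)(y-\hat y)]$, but these are cosmetic differences.

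One small sharpening: for the inner $j$-sum in the correction block (the paper's $S_1$) you do not actually need Lemma \ref{aaj1}. Since $\alpha_{i,k}$ is nondecreasing in $i$ for fixed $k$, the ranges $[\alpha_{i-1,k},\alpha_{i,k}-1]$ are pairwise disjoint and telescope, so $\sum_{i>k}\sum_{j=\alpha_{i-1,k}}^{\alpha_{i,k}-1}(\cdot)\le\sum_j(\cdot)$ with multiplicity exactly one, not $C+1$. Lemma \ref{aaj1} is genuinely needed only for the two boundary sub-terms (the paper's $S_2,S_3$), where the single index $\alpha_{i,k}-1$ can repeat up to $C+1$ times as $i$ varies. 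This is precisely why the final constant splits as $4C$ (from the two boundary pieces) plus $6$ (from the telescoped inner sum and the diagonal block), matching the stated $L_1=(4C+6)QC_{T,x_{\max}}$; if you apply the multiplicity bound to $S_1$ as well you would overshoot that constant, though of course the Lipschitz conclusion would still hold.
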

\begin{proof}
From (\ref{eq.FVS_SemiDiscrete1}), we have the following discretized form of the equation
\begin{align}\label{E:GenDis2}
\frac{d {\bf{\hat{n}}}(t)}{d t}= - \left[\Delta {\bf{J}}^{\text{agg}}({\bf{\hat{n}}}) +\Delta {\bf{J}}^{\text{brk}}({\bf{\hat{n}}}) \right] = {\bf{J}}({\bf{\hat{n}}}) \pp
\end{align}
To prove the Lipschitz conditions on ${\bf{J(\hat{n})}}$, it is sufficient to find the Lipschitz conditions 
on $\Delta {\bf{J}}^{\text{agg}}({\bf{\hat{n}}})$ and $\Delta {\bf{J}}^{\text{brk}}({\bf{\hat{n}}})$ separately. 
For the aggregation,
\begin{align*}
\|\Delta {\bf{J}}^{\text{agg}}({\bf{{n}}})- \Delta {\bf{J}}^{\text{agg}}({\bf{\hat{n}}})\|&= \sum_{i= 1}^I \Delta x_i |\Delta J^{\text{agg}}_i({\bf{n}})- \Delta J^{\text{agg}}_i({\bf{\hat{n}}})|.
\end{align*}
Substituting the value of $\Delta J^{\text{agg}}_i({\bf{n}})$ from the equation (\ref{fluxcombagg}) yields
\begin{align}\label{21}
\|\Delta {\bf{J}}^{\text{agg}}({\bf{{n}}})- \Delta {\bf{J}}^{\text{agg}}({\bf{\hat{n}}})\| \leq \sum_{i= 1}^I \bigg|\sum_{k=1}^{i-1} \frac{x_k}{x_i} & \Delta x_k \sum_{j=\alpha_{i-1,k}}^{\alpha_{i,k}-1} \beta_{j,k} \Delta x_j (-n_j n_k+\hat{n}_j \hat{n}_k)\bigg|\nonumber\\+ \sum_{i= 1}^I \bigg|\sum_{k=1}^{i-1} \frac{x_k}{x_i} \beta_{\alpha_{i,k}-1,k}\Delta x_k & (x_{\alpha_{i,k}-1/2} - (x_{i+1/2}-x_k))(n_k n_{\alpha_{i,k}-1}-\hat{n}_k \hat{n}_{\alpha_{i,k}-1})\bigg| \nonumber\\+ \sum_{i= 1}^I \bigg|\sum_{k=1}^{i-1} \frac{x_k}{x_i} \beta_{\alpha_{i-1,k}-1,k} \Delta x_k & (x_{\alpha_{i-1,k}-1/2} - (x_{i-1/2}-x_k))(n_k n_{\alpha_{i-1,k}-1}-\hat{n}_k \hat{n}_{\alpha_{i-1,k}-1})\bigg|\nonumber\\ +\sum_{i= 1}^I\bigg(\bigg|\sum_{j=\alpha_{i,i}}^{I} \beta_{j,i} \Delta x_i \Delta x_j &(n_i n_j- \hat{n}_i \hat{n}_j) \nonumber\\+ \beta_{\alpha_{i,i}-1,i} \Delta x_i & (x_{\alpha_{i,i}-1/2} - (x_{i+1/2}-x_i))(n_i n_{\alpha_{i,i}-1}- \hat{n}_i \hat{n}_{\alpha_{i,i}-1}) \bigg|\bigg)\nonumber\\ \leq S_1 + S_2\ +\ & S_3 + S_4 .
\end{align}
Now the terms $S_i, i= 1,\cdots,4$ in (\ref{21}) are evaluated one by one. First the term $S_1$ is simplified which may be estimated
\begin{align*}
S_1 \leq& \sum_{i= 1}^I \sum_{k=1}^{i-1} \frac{x_k}{x_i} \Delta x_k \sum_{j=\alpha_{i-1,k}}^{\alpha_{i,k}-1} \beta_{j,k} \Delta x_j |n_j n_k-\hat{n}_j \hat{n}_k|.
\end{align*}

Since $k < i$ implies that $x_k < x_i$. Using the relation 
$xy-\hat{x}\hat{y} = 1/2 [(x-\hat{x})(y+\hat{y})+(x+\hat{x})(y-\hat{y})]$, bound 
$\beta(x,y)\leq Q$ and setting $N_i= n_i\Delta x_i$ give
\begin{align*}
S_1 \leq& \frac{Q}{2} \sum_{i= 1}^I \bigg(\sum_{k=1}^{I} \Delta x_k |n_k- \hat{n}_k| \sum_{j=\alpha_{i-1,k}}^{\alpha_{i,k}-1} (N_j+\hat{N}_j)+ \sum_{k=1}^{I} (N_k+ \hat{N}_k) \sum_{j=\alpha_{i-1,k}}^{\alpha_{i,k}-1} \Delta x_j |n_j-\hat{n}_j| \bigg).
\end{align*}
Open the summation for each $i$, we obtain
\begin{align*}
S_1 \leq& \frac{Q}{2} \sum_{k= 1}^I \Delta x_k |n_k- \hat{n}_k| \sum_{j=\alpha_{0,k}}^{\alpha_{I,k}-1} (N_j+\hat{N}_j)+ \frac{Q}{2} \sum_{k=1}^{I} (N_k+ \hat{N}_k) \sum_{j=\alpha_{0,k}}^{\alpha_{I,k}-1} \Delta x_j |n_j-\hat{n}_j|.
\end{align*}
Having Lemmas \ref{boundednessofnumbercontinuous} and \ref{boundednessofnumber}, which say that the total number of 
particles is bounded by a constant $C_{T,x_{\text{max}}}$, $S_1$ is further simplified as $ S_1 \leq 2 Q C_{T,x_{\text{max}}} \|n-\hat{n}\|$.\enter

Now the term $S_2$ is calculated from (\ref{21}) which is taken as
\begin{align*}
S_2 \leq \sum_{i= 1}^I \sum_{k=1}^{i-1} \frac{x_k}{x_i}\beta_{\alpha_{i,k}-1,k} \Delta x_k (x_{\alpha_{i,k}-1/2} - (x_{i+1/2}-x_k))\, |n_k n_{\alpha_{i,k}-1}-\hat{n}_k \hat{n}_{\alpha_{i,k}-1}|.
\end{align*}
Further simplifications as in the previous case yield
\begin{align*}
S_2\leq &\sum_{i= 1}^I \sum_{k=1}^{i-1} \frac{Q}{2} \Delta x_k \Delta x_{\alpha_{i,k}-1}\bigg(|(n_k - \hat{n}_k)(n_{\alpha_{i,k}-1}+\hat{n}_{\alpha_{i,k}-1})+ (n_k + \hat{n}_k)(n_{\alpha_{i,k}-1}-\hat{n}_{\alpha_{i,k}-1})|\bigg)\\ \leq  \frac{Q}{2}& \sum_{i= 1}^I \sum_{k=1}^{i-1} \Delta x_k |n_k - \hat{n}_k|\, (N_{\alpha_{i,k}-1}+\hat{N}_{\alpha_{i,k}-1}) + \frac{Q}{2} \sum_{i= 1}^I \sum_{k=1}^{i-1} \Delta x_{\alpha_{i,k}-1} |n_{\alpha_{i,k}-1}-\hat{n}_{\alpha_{i,k}-1}| (N_k+\hat{N}_k).
\end{align*}
Changing the order of summation gives
\begin{align*}
S_2\leq \frac{Q}{2} \sum_{k= 1}^I & \Delta x_k |n_k - \hat{n}_k| \sum_{i=k+1}^{I} (N_{\alpha_{i,k}-1}+\hat{N}_{\alpha_{i,k}-1})\\& + \frac{Q}{2} \sum_{k= 1}^I (N_k+\hat{N}_k) \sum_{i=k+1}^{I} \Delta x_{\alpha_{i,k}-1} |n_{\alpha_{i,k}-1}-\hat{n}_{\alpha_{i,k}-1}|.
\end{align*}
By using the Lemma \ref{aaj1} which shows that the number of repetition of index in a cell is finite and bounded by some constant $C$, 
we obtain $ S_2\leq 2 C Q C_{T,x_{\text{max}}} \|n - \hat{n}\|$. The same bound on $S_3$ is achieved because the only 
difference is that the index $i-1$ is used instead of $i$.\enter

Finally the expression $S_4$ from (\ref{21}) can be written as
\begin{align*}
S_4\leq \sum_{i=1}^I & \bigg(\sum_{j=\alpha_{i,i}}^{I} \beta_{j,i} \Delta x_i \Delta x_j |n_i n_j- \hat{n}_i \hat{n}_j|\\&+ \beta_{\alpha_{i,i}-1,i} \Delta x_i (x_{\alpha_{i,i}-1/2} - (x_{i+1/2}-x_i))\, |n_i n_{\alpha_{i,i}-1}- \hat{n}_i \hat{n}_{\alpha_{i,i}-1}|\bigg)\\ \leq \frac{Q}{2} & \sum_{i=1}^I \sum_{j=1}^{I} (N_i+\hat{N}_i) \Delta x_j |n_j-\hat{n}_j|+ \frac{Q}{2} \sum_{i=1}^I \sum_{j=1}^{I} (N_j+\hat{N}_j) \Delta x_i |n_i-\hat{n}_i|\\& + \frac{Q}{2} \sum_{i=1}^I \Delta x_i |n_i-\hat{n}_i| (N_{\alpha_{i,i}-1}+\hat{N}_{\alpha_{i,i}-1}) + \frac{Q}{2} \sum_{i=1}^I (N_i+\hat{N}_i) \Delta x_{\alpha_{i,i}-1} |n_{\alpha_{i,i}-1}-\hat{n}_{\alpha_{i,i}-1}|.
\end{align*}
Further simplification gives $ S_4\leq 4 Q C_{T,x_{\text{max}}} \|n- \hat{n}\|.$ Adding all the results from 
$S_1, S_2, S_3$ and $S_4$ yields
\begin{align}
\|\Delta {\bf{J}}^{\text{agg}}({\bf{n}})-\Delta {\bf{J}}^{\text{agg}}({\bf{\hat{n}}})\|\leq& (4 C + 6 )Q C_{T,x_{\text{max}}} \|{\bf{n}}-{\bf{\hat{n}}}\|,
\end{align}
with a Lipschitz constant $L_1= (4 C + 6 )Q C_{T,x_{\text{max}}}$.\enter

Similarly, for the breakage problem, we have
\begin{align*}
\|\Delta {\bf{J}}^{\text{brk}}({\bf{n}})-\Delta {\bf{J}}^{\text{brk}}({\bf{\hat{n}}})\|&= \sum_{i=1}^I \Delta x_i \left|\Delta J^{\text{brk}}_{i}({\bf{n}})-\Delta J^{\text{brk}}_{i}({\bf{\hat{n}}})\right|.
\end{align*}
By using the equation (\ref{fluxcombbrk}), the above equation reduces to
\begin{align*}
\|\Delta {\bf{J}}^{\text{brk}}({\bf{n}})-\Delta {\bf{J}}^{\text{brk}}({\bf{\hat{n}}})\| \leq \sum_{i=1}^I \left|\sum_{k=i+1}^{I} S_k(n_k-\hat{n}_k)\Delta x_k \Delta x_i b_{i,k} - S_i (n_i-\hat{n}_i) \sum_{j= 1}^{i-1} \frac{x_j}{x_i} b_{j,i} \Delta x_j \Delta x_i\right|.
\end{align*}
Since $x_j < x_i$ for $j< i$ and having $bS\leq Q_1$ from (\ref{kernelbound}), the above can be simplified as
\begin{align}\label{final}
\|\Delta {\bf{J}}^{\text{brk}}({\bf{n}})-\Delta {\bf{J}}^{\text{brk}}({\bf{\hat{n}}})\|& \leq {Q}_1 \sum_{i=1}^I \Delta x_i \sum_{k=1}^{I} \Delta x_k \left|n_k-\hat{n}_k\right| + {Q}_1 \sum_{i=1}^I \left|n_i-\hat{n}_i\right| \Delta x_i \sum_{j=1}^{I} \Delta x_j.
\end{align}
Therefore, the following is obtained
\begin{align*}
\|\Delta {\bf{J}}^{\text{brk}}({\bf{n}})-\Delta {\bf{J}}^{\text{brk}}({\bf{\hat{n}}})\| \leq 2 {Q}_1 x_{\text{max}} \|{\bf{{n}}}-{\bf{\hat{n}}}\|,
\end{align*}
with a Lipschitz constant $L_2= 2 {Q}_1 x_{\text{max}}$. Hence, the Lipschitz conditions for ${\bf{J(\hat{n})}}$ with 
a Lipschitz constant $L= (4C+6) Q C_{T,x_{\text{max}}}+ 2 {Q}_1 x_{\text{max}}$ is shown.
\end{proof}
Hence, by Theorem \ref{aaj2} the order of convergence of the FVS for the aggregation or 
breakage or coupled processes is same as the order of consistency which we have seen before in Lemma 
\ref{conserror}.

\section{Numerical Results}\label{num:resultsfvsaggbrk}
The mathematical results on convergence analysis are verified numerically for pure aggregation,
breakage and also for the combined processes considering several test problems. All numerical simulations below were carried out to 
investigate the experimental order of convergence (EOC) on four different types of meshes discussed in the next 
subsection. \enter

If the problem has analytical solutions, the following formula is used to calculate the EOC
\begin{align}\label{E:EOC_PureBreakage}
\text{EOC}= \ln(E_I/E_{2I})/\ln(2) \pp
\end{align}
Here $E_I$ and $E_{2I}$ are the discrete relative error norms calculated by dividing the error $\|N-\hat{N}\|$ by $\|N\|$ where $N, \hat{N}$ are the number of particles obtained mathematically and numerically, respectively. The symbols $I$ and $2I$ correspond to the number of degrees of freedom.\enter

Now, in case of unavailability of the analytical solutions, the EOC can be computed as
\begin{align}\label{E:EOC_GeneralBreakage}
\text{EOC}= \ln\bigg(\frac{\|\hat{N}_I-\hat{N}_{2I}\|}{\|\hat{N}_{2I}-\hat{N}_{4I}\|} \bigg)/\ln(2) \pk
\end{align}
where $\hat{N}_I$ is obtained by the numerical scheme using a mesh with $I$ degrees of freedom. \enter

Before going into the details of the test cases, in the following subsection we discuss briefly four different types 
of uniform and non-uniform meshes where global truncation errors are obtained numerically. These meshes have also been 
used in J. Kumar and Warnecke \cite{J_KUMARNM1:2008}.

\subsection{Meshes}

{\em{Uniform mesh}}: A uniform mesh is obtained when $\Delta x_i = \Delta x$ for all $i$.\\

{\em{Non-uniform smooth mesh}}: We are familiar with such a mesh from the previous section and 
Figure \ref{p:SmoothMesh}. For the numerical computations, a geometric mesh is considered.\\

%

{\em{Oscillatory mesh}}: The numerical verification has been done on an oscillatory mesh by taking 
$r=2$ in the equation (\ref{oscimesh}). In this case, the EOC is evaluated numerically by dividing the 
computation domain into 30 uniform mesh points initially. Then each cell is divided by a 1:2 ratio on further levels of computation.\\

{\em{Random mesh}}: Similar to the previous case, we started again with a geometric mesh with 30 grid 
points but then each cell is divided into two parts of random width in the further refined levels of computation. Here, we performed ten runs on different random grids and the relative errors are measured. The average of these errors over ten runs is used to calculate the EOC.

\subsection{Numerical examples}

\subsubsection{Pure aggregation}
{\bf{Test case 1:}}\\

The numerical verification of the EOC of the FVS for aggregation is discussed by taking two problems, namely the case of sum and product aggregation kernels. The analytical solutions for
both problems taking the negative exponential $n(0,x)= \exp(-\alpha x)$ as initial condition has been given in 
Scott \cite{Scott:1968}. Hence, the EOC is computed by using the relation (\ref{E:EOC_PureBreakage}). Table 
\ref{T:RelativeErrorAgg} shows that the EOC is 2 on uniform and non-uniform smooth meshes and is 1 on oscillatory and 
random grids in both cases. The computational domain in this case is taken as $[1E-6, 1000]$ which 
corresponds to the $\xi$ domain $[\ln(1E-6),\ln(1000)]$ for the exponential transformation 
$x=\exp(\xi)$ for the geometric mesh. The parameter $\alpha= 10$ was taken in the initial condition. 
The simulation result is presented at time $t= 0.5$ and $t= 0.3$ respectively for the sum and the 
product aggregation kernels corresponding to the aggregation extent 
$\hat{N}(t)/\hat{N}(0)\approx 0.80$.

\begin{table}
\caption{EOC (\ref{E:EOC_PureBreakage}) of the numerical schemes for {\bf{Test case 1}}.}
  \label{T:RelativeErrorAgg}
  \begin{center}\subtable[Uniform mesh]
    {\label{T:RelativeErrora} \begin{tabular}{l l l l l}
\\[-2ex]\toprule
Grid & $\beta(x,y)=x+y$ & $\beta(x,y)=xy$ \\points & Error\ \ \ \quad EOC &  Error\ \ \quad EOC\\
 \midrule
60  & 0.24E-3 \quad   -  &  0.0177 \quad  -  \\
120 & 0.11E-3 \quad 1.95 &  0.0045 \quad 1.96\\
240 & 0.04E-3 \quad 1.93 &  0.0012 \quad 1.94\\
480 & 0.01E-3 \quad 1.94 &  0.0003 \quad 1.92\\
\bottomrule
\end{tabular}}\hspace{1cm}
    \subtable[Non-uniform smooth mesh]
    {\label{T:RelativeErrora} \begin{tabular}{l l l l}
\\[-2ex]\toprule
Grid & $\beta(x,y)=x+y$ & $\beta(x,y)=xy$ \\points &  Error\ \quad EOC &  Error\ \quad EOC\\
 \midrule
 60  & 0.0047 \quad - &  0.0086 \quad -\\
 120  & 0.0012 \quad 1.99 &  0.0023 \quad 1.90\\
240   & 0.0003 \quad 1.98 &  0.0006 \quad 1.96\\
480   & 0.0001 \quad 2.00 &  0.0001 \quad 1.99\\
\bottomrule
\end{tabular}} \hspace{1cm}
\subtable[Oscillatory mesh]
    {\label{T:numericalerrorb} \begin{tabular}{l l l l l}
\\[-2ex]\toprule
Grid & $\beta(x,y)=x+y$ & $\beta(x,y)=xy$ \\points & Error\ \ \ \quad EOC &  Error\ \ \ \quad EOC\\
 \midrule
60  & 0.0029 \, \quad - & 0.0048 \, \quad -\\
120  & 0.0014 \, \quad 1.01 & 0.0019 \, \quad 1.29\\
240   & 6.05E-4 \quad 1.24 & 7.66E-4 \quad 1.31\\
480   & 2.20E-4 \quad 1.31 & 3.52E-4 \quad 1.12\\
\bottomrule
\end{tabular}}\hspace{1cm}
    \subtable[Random mesh]
    {\label{T:numericalerrorb} \begin{tabular}{l l l l}
\\[-2ex]\toprule
Grid & $\beta(x,y)=x+y$ & $\beta(x,y)=xy$ \\points & Error\ \quad EOC &  Error\ \quad EOC\\
 \midrule
 60  & 0.79E-3 \quad - &  0.0017 \quad -\\
 120  & 0.42E-3 \quad 0.98 &  8.2E-4 \quad 1.06\\
240   & 0.22E-3 \quad 1.02 &   2.8E-4 \quad 1.21\\
480   & 0.82E-4 \quad 1.21 &   1.5E-4 \quad 1.02\\
\bottomrule
\end{tabular}} 
\end{center}
\end{table}

\subsubsection{Pure breakage}

{\bf{Test case 2:}}\\

Here, the EOC is calculated for the binary breakage $b(x, y) = 2/y$ together with the linear and quadratic 
selection functions, i.e.\ $S(x) = x$ and $S(x) = x^2$. The analytical solutions for such problems
have been given in Ziff and McGrady \cite{Ziff:1985} for a mono-disperse initial condition of size 
unity, i.e.\ $n(0,x)= \delta(x-1)$. Hence, by using the formula (\ref{E:EOC_PureBreakage}),
we observe from the Table \ref{T:RelativeError} that the FVS is second order convergent on all the 
grids. The computational domain in this case is taken as $[1E-3, 1]$. Since the rate of breaking 
particles taking quadratic selection function is less than that of linear selection function, we 
take $t=100$, $200$ for linear and quadratic selection functions, respectively. The time has been 
chosen differently for both the selection functions to have the same extent of breakage 
$\hat{N}(t)/\hat{N}(0) \approx 22$. \\

{\bf{Test case 3:}}\\

Now the case of multiple breakage with the quadratic selection function $S(x) = x^2$ is considered where an analytical 
solution is not known. Therefore, the EOC is calculated using (\ref{E:EOC_GeneralBreakage}). 
For the numerical simulations, the following normal distribution as an initial condition is taken
\begin{align}\label{normalIC}
n(0,x)= \frac{1}{\sigma \sqrt{2\pi}}\exp\left(-\frac{(x-\mu)^2}{2\sigma^2}\right).
\end{align}
The computations are made for two breakage functions considered by Diemer and Olson \cite{Diemer:2002a} and Ziff \cite{Ziff:1991}, respectively
\begin{itemize}
\item case(i): $\dps b(x, y) = \frac{px^c(y-x)^{c+(c+1)(p-2)}[c+(c+1)(p-1)]!}{y^{pc+p-1}c![c+(c+1)(p-2)]!}, \quad p\in \mathbb{N}, p\geq 2$
\item case(ii): $\dps b(x, y) = \frac{12 x}{y^2}\left(1-\frac{x}{y}\right)$.
\end{itemize}
In case(i) the relation $\int_0^y b(x,y)dx= p$ 
holds where $p$ gives the total number of fragments per breakage event. The parameter $c\geq 0$ is 
responsible for the shape of the daughter particle distribution, see also \cite{Sommer:2006}. 
The numerical solutions are obtained using $p=4, c=2$. The second breakage function gives ternary 
breakage. For the numerical simulation the minimum and maximum values of $x$ are taken as $1E-3$ and 
$1$ respectively. The time $t=100$ is set to get the breakage extent 
$\hat{N}(t)/\hat{N}(0) \approx 22$ in case(i) while $t=150$ is used for case(ii). 
As expected from the mathematical analysis, we again observe from the Table \ref{Testcase2:RelativeError}
that the FVS shows convergence of second order on all the meshes. The computations for higher values 
of $p$ up to 19 are also tested and observed that there is no marked difference in the EOC.

\begin{table}
\caption{EOC (\ref{E:EOC_PureBreakage}) of the numerical schemes for {\bf{Test case 2}}.}
  \label{T:RelativeError}
  \begin{center}\subtable[Uniform smooth mesh]
    {\label{T:RelativeErrora} \begin{tabular}{l l l l}
\\[-2ex]\toprule
Grid & \ \ $S(x)=x$ & \ \ $S(x)=x^2$ \\points & Error\ \quad EOC &  Error\ \quad EOC\\
 \midrule
 60  & 0.3312 \quad - &  0.1870 \quad -\\
 120  & 0.0829 \quad 1.99 &  0.0482 \quad 1.95\\
240   & 0.0207 \quad 2.00 &  0.0126 \quad 1.94\\
480   & 0.0052 \quad 2.00 &  0.0034 \quad 1.90\\
\bottomrule
\end{tabular}} \hspace{1cm}
    \subtable[Non-uniform smooth mesh]
    {\label{T:RelativeErrora} \begin{tabular}{l l l l}
\\[-2ex]\toprule
Grid & \ \ $S(x)=x$ & \ \ $S(x)=x^2$ \\points & Error\ \quad EOC &  Error\ \quad EOC\\
 \midrule
 60  & 0.0526 \quad - &  0.1638 \quad -\\
 120  & 0.0136 \quad 1.95 &  0.0423 \quad 1.95\\
240   & 0.0034 \quad 1.99 &  0.0112 \quad 1.92\\
480   & 0.0009 \quad 2.00 &  0.0031 \quad 1.85\\
\bottomrule
\end{tabular}}\hspace{1cm}
\subtable[Oscillatory mesh]
    {\label{T:numericalerrorb} \begin{tabular}{l l l l}
\\[-2ex]\toprule
Grid & \ \ $S(x)=x$ & \ \ $S(x)=x^2$ \\points & Error\ \quad EOC &  Error\ \quad EOC\\
 \midrule
 60  & 0.0577 \quad - &  0.1310 \quad -\\
 120  & 0.0157 \quad 1.88 &  0.0376 \quad 1.80\\
240   & 0.0042 \quad 1.91 &   0.0105 \quad 1.84\\
480   & 0.0011 \quad 1.91 &   0.0030 \quad 1.82\\
\bottomrule
\end{tabular}}\hspace{1cm}
\subtable[Random mesh]
    {\label{T:numericalerrorb} \begin{tabular}{l l l l}
\\[-2ex]\toprule
Grid & \ \ $S(x)=x$ & \ \ $S(x)=x^2$ \\points & Error\ \quad EOC &  Error\ \quad EOC\\
 \midrule
 60  & 0.3516 \quad - &  1.1106 \quad -\\
 120  & 0.1001 \quad 1.81 &  0.3301 \quad 1.75\\
240   & 0.0282 \quad 1.83 &   0.0944 \quad 1.81\\
480   & 0.0078 \quad 1.85 &   0.0268 \quad 1.82\\
\bottomrule
\end{tabular}}
\end{center}
\end{table}

\begin{table}
\caption{EOC (\ref{E:EOC_GeneralBreakage}) of the numerical schemes for {\bf{Test case 3}}.}
  \label{Testcase2:RelativeError}
  \begin{center}
    \subtable[Uniform smooth mesh]
    {\label{T:RelativeErrora} \begin{tabular}{l l l l l}
\\[-2ex]\toprule
Grid & \quad case(i) & \quad case(ii) \\points & Error\ \quad EOC &  Error\ \quad EOC\\
 \midrule
 60  & - \quad \quad\quad \ - & - \quad\quad\quad \ - \\
 120  & 2.0655 \quad - & 4.7916 \quad -\\
240   & 0.6548 \quad 1.75 & 2.5829 \quad 2.16\\
480   & 0.1789 \quad 1.93 & 0.4364 \quad 1.91\\
960   & 0.0441 \quad 2.10 & 0.1792 \quad 1.67\\
\bottomrule
\end{tabular}} \hspace{1cm}
    \subtable[Non-uniform smooth mesh]
    {\label{T:numericalerrorb} \begin{tabular}{l l l l l}
\\[-2ex]\toprule
Grid & \quad case(i) & \quad case(ii) \\points & Error\ \quad EOC &  Error\ \quad EOC\\
 \midrule
 60  & - \quad \quad\quad \ - & - \quad \quad\quad \ - \\
 120  & 0.0244 \quad - & 0.0113 \quad -\\
240   & 0.0060 \quad 2.02 & 0.0028 \quad 2.01\\
480   & 0.0015 \quad 1.98 & 0.0007 \quad 2.00\\
960   & 0.0004 \quad 2.02 & 0.0002 \quad 2.00\\
\bottomrule
\end{tabular}} \hspace{1cm}
      \subtable[Oscillatory mesh]
   {\label{T:numericalerrorb} \begin{tabular}{l l l l l}
\\[-2ex]\toprule
Grid & \quad case(i) & \quad case(ii) \\points &  Error\ \quad EOC &  Error\ \quad EOC\\
\midrule
60  & - \quad \quad\quad \ - & - \quad \quad\quad \ - \\
120  & 0.78E-3 \quad - & 0.91E-3 \quad -\\
240   & 0.21E-3 \quad 1.74 & 0.28E-3 \quad 1.84\\
480   & 0.06E-3 \quad 1.93 & 0.09E-3 \quad 1.92\\
960   & 0.01E-3 \quad 2.02 & 0.02E-3 \quad 1.95\\
\bottomrule
\end{tabular}}\hspace{1cm}
\subtable[Random mesh]
    {\label{T:numericalerrorb} \begin{tabular}{l l l l l}
\\[-2ex]\toprule
Grid & \quad case(i) & \quad case(ii) \\points &  Error\ \quad EOC &  Error\ \quad EOC\\
 \midrule
 60  & - \quad \quad\quad \ - & - \quad \quad\quad \ - \\
 120  & 0.92E-3 \quad - & 0.89E-3 \quad -\\
240   & 0.18E-3 \quad 1.71 & 0.14E-3 \quad 1.82\\
480   & 0.05E-3 \quad 1.82 & 0.02E-3 \quad 1.90\\
960   & 0.02E-3 \quad 1.91 & 0.01E-3 \quad 1.92\\
\bottomrule
\end{tabular}}
\end{center}
\end{table}

\subsubsection{Coupled aggregation-breakage}
{\bf{Test case 4:}}\\

Finally, the EOC is evaluated for the simultaneous aggregation-breakage problem considering a constant 
aggregation kernel $\beta(x,y)= \beta_0$ and breakage kinetics $b(x,y)=2/y, S(x)=x$. The analytical solutions for this problem are given by Lage \cite{Lage:2002} for the following 
two different initial conditions
\begin{itemize}
\item case(i): $\dps n(0,x)= N_0\left[\frac{2N_0}{x_0}\right]^2 x \exp\left(-2 x \frac{N_0}{x_0}\right)$
\item case(ii): $\dps n(0,x)= N_0 \left[\frac{N_0}{x_0}\right] \exp\left(- x \frac{N_0}{x_0}\right).$
\end{itemize}
This is a special case where the number of particles stays constant. The later initial condition is a steady state 
solution. For the simulation the computational domain $[1E-2, 10]$ with $N_0= x_0= 1$ and time $t= 0.3$ is taken. 
From Table \ref{Testcase2:RelativeErrorAggBrk}, we find that the FVS is second order convergent on uniform and 
non-uniform smooth meshes and it gives first order on oscillatory and random meshes using 
(\ref{E:EOC_PureBreakage}). It should be mentioned that the computation has also been done for 
the product aggregation kernel $\beta(x,y)= xy$ and the linear selection function $S(x)=x$ taken 
together with two different general breakage functions as stated in the previous section. Analytical 
solutions are not available for such problems and so the EOC was calculated using 
(\ref{E:EOC_GeneralBreakage}). We observed again that the FVS shows similar results of convergence for 
these meshes.

\begin{table}
\caption{EOC (\ref{E:EOC_PureBreakage}) of the numerical schemes for {\bf{Test case 4}}.}
  \label{Testcase2:RelativeErrorAggBrk}
  \begin{center}
    \subtable[Uniform mesh]
    {\label{T:numericalerrorb} \begin{tabular}{l l l l}
\\[-2ex]\toprule
Grid & \quad case(i) & \quad case(ii) \\points &  Error\ \quad EOC &  Error\ \quad EOC\\
 \midrule
 60  & 0.3E-2 \quad    -  & 0.0032 \quad  -\\
 120  & 0.1E-2 \quad 1.75 & 0.0009 \quad 1.83\\
240   & 0.3E-3 \quad 1.86 & 2.4E-3 \quad 1.90\\
480   & 0.7E-4 \quad 2.01 & 0.7E-4 \quad 1.89\\
\bottomrule
\end{tabular}}\hspace{1cm}
    \subtable[Non-Uniform smooth mesh]
    {\label{T:RelativeErrora} \begin{tabular}{l l l l}
\\[-2ex]\toprule
Grid & \quad case(i) & \quad case(ii) \\points &  Error\ \quad EOC &  Error\ \quad EOC\\
 \midrule
60  & 0.0066 \quad   -  & 0.0018 \quad - \\
120 & 0.0018 \quad 1.90 & 0.0005 \quad 1.95\\
240 & 0.0004 \quad 1.97 & 0.0001 \quad 1.98\\
480 & 0.0001 \quad 2.00 & 2.9E-5 \quad 2.00\\
\bottomrule
\end{tabular}}\hspace{1cm}
   \subtable[Oscillatory mesh]
   {\label{T:numericalerrorb} \begin{tabular}{l l l l}
\\[-2ex]\toprule
Grid & \quad case(i) & \quad case(ii) \\points &  Error\ \quad EOC &  Error\ \quad EOC\\
\midrule
60  & 0.0019 \, \quad - & 0.0053 \, \quad - \\
120   & 0.62E-3 \quad 1.28 & 0.31E-2 \quad 0.98\\
240   & 0.29E-3 \quad 1.13 & 1.34E-3 \quad 1.07\\
480  & 0.15E-3 \quad 1.02 & 0.71E-3 \quad 1.06\\
\bottomrule
\end{tabular}} \hspace{1cm}
    \subtable[Random mesh]
    {\label{T:numericalerrorb} \begin{tabular}{l l l l}
\\[-2ex]\toprule
Grid & \quad case(i) & \quad case(ii) \\points & Error\ \quad EOC &  Error\ \quad EOC\\
 \midrule
60  & 0.0082 \, \quad -     & 0.0042 \, \quad - \\
120  & 0.0037 \, \quad 1.07 & 0.0023 \, \quad 0.91 \\
240   & 1.45E-3 \quad 1.22 & 0.0011 \, \quad 1.10\\
480   & 0.86E-3 \quad 1.01 & 0.04E-2 \quad 1.23\\
\bottomrule
\end{tabular}}
\end{center}
\end{table}

\section{Conclusions}\label{conclusions:fvsaggbrk}
In this article the convergence analysis of the finite volume techniques was studied for the non-linear aggregation 
and multiple breakage equations. We showed the consistency and then proved the Lipschitz continuity of the numerical 
fluxes to complete the convergence results. This investigation was based on the basic existing theorems and 
definitions from the book of Hundsdorfer and Verwer \cite{Hundsdorfer:2003} and the paper of Linz \cite{Linz:1975}. 
It was noticed that the scheme was second order convergent for a family of meshes for the pure breakage problem. 
For the aggregation and combined processes, it was not straightforward to evaluate the consistency and the 
convergence error on general meshes. This depended upon the type of grids chosen for the computations. Moreover, 
in these cases the method gave second order convergence on uniform and non-uniform smooth meshes while on 
non-uniform grids it showed only first order. The mathematical results of convergence 
analysis were verified numerically on several meshes by taking various examples of pure aggregation, pure breakage and 
the combined problems.

\section{Acknowledgements}
This work was supported by the DFG Graduiertenkollegs-828 and 1554, ({\em{Micro-Macro-Interactions in Structured Media and Particles Systems}})
Otto-von-Guericke-Universit\"{a}t Magdeburg. The authors gratefully
acknowledge for funding through this PhD programme.

\small

\begin{appendix}
\section{Bound on total number of particles}\label{appendixFV_number}
We give the proof of Lemmas \ref{boundednessofnumbercontinuous} and \ref{boundednessofnumber} in Appendices \ref{conagbrk} and \ref{disagbrk}, respectively.
\subsection{Continuous aggregation and multiple breakage equation}\label{conagbrk}
\begin{proof}{[Lemma \ref{boundednessofnumbercontinuous}]}\\ Integrating the equation (\ref{E:FormCLtrun}) with respect to $x$ from $0$ to $x_{\text{max}}$ gives
\begin{align}\label{leibequationaagbrk}
\frac{d}{dt} \int_0^{x_{\text{max}}} n(t,x)dx= \int_0^{x_{\text{max}}} -\frac{1}{x} \frac{\partial}{\partial x} (F^\text{agg}+F^\text{brk}) dx.
\end{align}
From the equations (\ref{I:Aggfluxexacttrun}) and (\ref{I:Brkfluxexacttrun}), we know that
\begin{align*}
\frac{\partial}{\partial x} (F^\text{agg}(t,x)) = \frac{\partial}{\partial x} \int_0^x 
\int_{x-u}^{x_{\text{max}}} u \beta(u,v)n(t,u)n(t,v) dv du \quad \text{and}
\end{align*}
\begin{align*}
\frac{\partial}{\partial x} (F^\text{brk}(t,x)) = - \frac{\partial}{\partial x} \int_{x}^{x_{\text{max}}} \int_0^x u b(u,v) S(v) n(t,v) du dv.
\end{align*}
Applying the Leibniz integration rule on each of the flux separately ensures
\begin{align}\label{leibnizaggflux}
\frac{\partial}{\partial x} (F^\text{agg}(t,x)) = \int_0^{x_{\text{max}}} x \beta(x,v)n(t,x)n(t,v) 
dv -\int_0^x u \beta(u,x-u)n(t,u)n(t,x-u) du
\end{align}
and
\begin{align}\label{leibnizbrkflux}
\frac{\partial}{\partial x} (F^\text{brk}(t,x)) = - \int_{x}^{x_{\text{max}}} x b(x,v) S(v) n(t,v) dv +
\int_0^x u b(u,x) S(x) n(t,x) du.
\end{align}
Inserting (\ref{leibnizaggflux}) and (\ref{leibnizbrkflux}) into (\ref{leibequationaagbrk}) to get
\begin{align}\label{fubininumber}
\frac{d N(t)}{dt} =& \int_0^{x_{\text{max}}} \int_0^x \frac{u}{x} \beta(u,x-u)n(t,u)n(t,x-u) du dx -
 \int_0^{x_{\text{max}}} \int_0^{x_{\text{max}}} \beta(x,v)n(t,x)n(t,v) dv dx
 \nonumber \\&+ \int_{0}^{x_{\text{max}}} \int_{x}^{x_{\text{max}}} b(x,v) S(v) n(t,v) dv dx- \int_{0}^{x_{\text{max}}}
\int_0^x \frac{u}{x} b(u,x) S(x) n(t,x) du dx.
\end{align}
Changing the order of integration for the first and third integrals on the rhs of 
(\ref{fubininumber}) yields
\begin{align}
\frac{d N(t)}{dt} =& \int_0^{x_{\text{max}}} \int_u^{x_{\text{max}}} \frac{u}{x} \beta(u,x-u)n(t,u)n(t,x-u) dx du -
 \int_0^{x_{\text{max}}} \int_0^{x_{\text{max}}} \beta(x,v)n(t,x)n(t,v) dv dx
 \nonumber \\&+ \int_{0}^{x_{\text{max}}} \int_{0}^{v} b(x,v) S(v) n(t,v) dx dv- \int_{0}^{x_{\text{max}}}
\int_0^x \frac{u}{x} b(u,x) S(x) n(t,x) du dx.
\end{align}
Since $x\geq u$ for the first integral, this implies that $u/x \leq 1$. Substituting $x= z+u$ such 
that $dx= dz$, the above can be rewritten as
\begin{align*}
\frac{d N(t)}{dt} \leq & \int_0^{x_{\text{max}}} \int_0^{x_{\text{max}}-u} \beta(u,z)n(t,u)n(t,z) dz du -
 \int_0^{x_{\text{max}}} \int_0^{x_{\text{max}}} \beta(x,v)n(t,x)n(t,v) dv dx
 \nonumber \\&+ \int_{0}^{x_{\text{max}}} S(v) n(t,v) \int_{0}^{v} b(x,v) dx dv- \int_{0}^{x_{\text{max}}} \frac{S(x) n(t,x)}{x}
\int_0^x u b(u,x) du dx.
\end{align*}
Notice that the first two integrals combined give a negative value. Using the relation 
(\ref{breakageproperties}) of the breakage function in the last integral and due to negativity
\begin{align*}
\frac{d N(t)}{dt} \leq \int_{0}^{x_{\text{max}}} S(v) n(t,v) \int_{0}^{v} b(x,v) dx dv.
\end{align*}
From the bounds (\ref{kernelbound}) we know that $bS\leq Q_1$. Estimating $v\leq x_{\text{max}}$ leads to
\begin{align*}
\frac{d N(t)}{dt} \leq {Q}_1 x_{\text{max}} N(t).
\end{align*}
Therefore, the total number of particles is bounded and the bound is given as
\begin{align*}
{N}(t) \leq {N}(0) \exp(x_{\text{max}} Q_1 t) \leq {N}(0) \exp(x_{\text{max}} Q_1 T) = C_{T,x_{\text{max}}}.
\end{align*}
\end{proof}

\subsection{Discrete aggregation and multiple breakage equation}\label{disagbrk}

\begin{proof}{[Lemma \ref{boundednessofnumber}]}\\ Multiplying the equation (\ref{E:GenDis}) by 
$\Delta x_i/x_i$ and summing with respect to $i$ gives
\begin{align}\label{nobound1}
\frac{d (\sum_{i=1}^I \hat{n}_i(t)\Delta x_i)}{dt}  = -\sum_{i=1}^I \frac{1}{x_i}
\bigg[J^{\text{agg}}_{i+1/2} - J^{\text{agg}}_{i-1/2} + J^{\text{brk}}_{i+1/2} - J^{\text{brk}}_{i-1/2}\bigg].
\end{align}
We write out the summation over $i$ of the aggregation fluxes $J^{\text{agg}}_{i\pm 1/2}$ to get
\begin{align*}
-\sum_{i=1}^I \frac{1}{x_i} \bigg[J^{\text{agg}}_{i+1/2} - J^{\text{agg}}_{i-1/2}\bigg] = \frac{1}{x_1} J^{\text{agg}}_{1/2}- J^{\text{agg}}_{1+1/2}\left(\frac{1}{x_1}-\frac{1}{x_2}\right)&-\cdots- J^{\text{agg}}_{I- 1/2}\left(\frac{1}{x_{I-1}}-\frac{1}{x_I}\right)-\frac{1}{x_I} J^{\text{agg}}_{I+ 1/2}.
\end{align*}
For the breakage fluxes $J^{\text{brk}}_{i\pm 1/2}$ in (\ref{nobound1}) we substitute the definition 
(\ref{2}). Introducing the notations $\hat{N}_i(t)= \hat{n}_i(t) \Delta x_i$ and 
$\hat{N}(t)= \sum_{i=1}^I \hat{N}_i(t)$ ensure
\begin{align*}
\frac{d \hat{N}(t)}{dt}  =& \frac{1}{x_1}J^{\text{agg}}_{1/2} -\sum_{i=1}^{I-1} J^{\text{agg}}_{i+1/2}\left(\frac{1}{x_{i}}-\frac{1}{x_{i+1}}\right) - \frac{1}{x_I}J^{\text{agg}}_{I+1/2}\\
 & + \sum_{i=1}^I \sum_{k= i+1}^I \hat{N}_k(t) S(x_k) b(x_i,x_k) \Delta x_i - \sum_{i=1}^I \hat{N}_i(t) S(x_i) \sum_{j= 1}^{i-1} \frac{x_j}{x_i} b(x_j,x_i) \Delta x_j.
\end{align*}
Due to positivity of $J^{\text{agg}}_{i+1/2}$ for all $i$ and $J^{\text{agg}}_{1/2}= 0$, we estimate
\begin{align*}
\frac{d \hat{N}(t)}{dt} \leq \sum_{i=1}^I \sum_{k= i+1}^I \hat{N}_k(t) S(x_k) b(x_i,x_k) \Delta x_i - \sum_{i=1}^I \hat{N}_i(t) S(x_i) \sum_{j= 1}^{i-1} \frac{x_j}{x_i} b(x_j,x_i) \Delta x_j.
\end{align*}
Changing the order of summation for the first term and the summation indices in the second term yield
\begin{align*}
\frac{d \hat{N}(t)}{dt} \leq \sum_{k=1}^I \hat{N}_k(t) S(x_k) \left[\sum_{i= 1}^{k-1} b(x_i,x_k) \Delta x_i (1-x_i/x_k)\right].
\end{align*}
Since $i<k$ implies that $1-x_i/x_k < 1$. Having the bound $b S\leq Q_1$ gives 
$d \hat{N}(t)/dt \leq x_{\text{max}} {Q}_1 \hat{N}(t)$. 
Therefore, the following bound is obtained on the total number of particles by using the FVS as
\begin{align*}
\hat{N}(t) \leq \hat{N}(0) \exp(x_{\text{max}} {Q}_1 t) \leq \hat{N}(0) \exp(x_{\text{max}} {Q}_1 T) = C_{T,x_{\text{max}}},
\end{align*}
which is the same bound as explained in the previous lemma, provided $\hat{N}(0)= {N}(0)$.
\end{proof}
\end{appendix}

\end{document}